\newtheorem{thm}{Theorem}
\newtheorem{cor}[thm]{Corollary}
\newtheorem{prop}[thm]{Proposition}
\newtheorem{conj}[thm]{Conjecture}
\newtheorem{Definition}[thm]{Definition}
\newenvironment{definition}
  {\begin{Definition}\rm}{\end{Definition}}
\newtheorem{Example}[thm]{Example}
\newenvironment{example}
  {\begin{Example}\rm}{\hfill$\Box$\end{Example}}
\newtheorem{Remark}[thm]{Remark}
\newcommand{\Z}{\mathbb{Z}}
\newcommand{\R}{\mathbb{R}}
\title{Orientations, semiorders, arrangements, and parking functions}
\author{Sam Hopkins}
\email{hopkins@reed.edu}
\author{David Perkinson}
\email{davidp@reed.edu}
\address{Reed College, Portland OR, 97202}
\begin{document}
\begin{abstract}
It is known that the Pak-Stanley labeling of the Shi hyperplane arrangement
provides a bijection between the regions of the arrangement and parking
functions. For any graph $G$, we define the $G$-semiorder arrangement and show
that the Pak-Stanley labeling of its regions produces all $G$-parking functions.
\end{abstract}

\maketitle

In his study of Kazhdan-Lusztig cells of the affine Weyl group of
type $A_{n-1}$, \cite{shi}, J.-Y. Shi introduced the arrangement of hyperplanes in $\R^n$
now known as the Shi arrangement:
\[
x_i-x_j=0,1\qquad 1\leq i<j\leq n.
\]
Among other things, he proved that the number of regions in the complement of
this set of hyperplanes is $(n+1)^{(n-1)}$, Cayley's formula for the number of
trees on $n+1$ labeled vertices.  The first bijective proof of this fact is due
to Pak and Stanley, \cite{pak-stanley}, who provide a method for labeling the
regions with parking functions of size $n$.  Given a graph $G$, Postnikov and
Shapiro, \cite{postnikov}, introduced the notion of a $G$-parking function.  In
the case where there exists a vertex $q$ connected by edges to every other
vertex of $G$, Duval, Klivans, and Martin, \cite{duval}, have defined the
$G$-Shi arrangement, and conjecture that when its regions are labeled by the
method of Pak and Stanley, the resulting labels are exactly the $G$-parking
functions with respect to $q$.  In this case, however, there may be duplicates
among the labels.  Letting $G$ be the complete graph on $n+1$ vertices
recaptures the original result of Pak and Stanley.  Our work was motivated by
this conjecture.

Figure~\ref{schema} serves as a guide to our paper.  The four corners of the
square in Figure~\ref{schema} are labeled by structures we associate to a
graph $G$, which we now describe.
\begin{figure}[ht] 
\begin{tikzpicture}[scale=0.68]

  \draw (0,0) node(pf){\parbox[h]{1.5in}{\centering$\mathcal{S}$:
  quasi-superstables\\[3pt][Def.~\ref{def:quasi-superstable}]}};
  \draw (10,0) node(order){\parbox[h]{1.2in}{\centering$\mathbb{I}$:
  $\mbox{$G$-semiorders}$\\[3pt][Sect.~\ref{semiorder}]}};
  \draw (0,10) node(region){\parbox[h]{0.8in}{\centering$\mathcal{R}$:
  regions\\[3pt][Def.~\ref{semiorder arrangement}]}};
  \draw (10,10) node(orient){\parbox[h]{1.5in}{\centering$\mathbb{O}$:
  $G$-semiorientations\\[3pt][Def.~\ref{semiorientation}]}};
\draw[double, ->] (pf) to node[auto] {chip-firing} (order); 
\path (pf) to node[auto,swap] {[Sect.~\ref{ss-alg}]} (order); 
\draw[double, ->] (region) to node[auto,swap]
{\parbox[h]{0.8in}{\centering Pak-Stanley\\[3pt][Sect.~\ref{labeling}]}} (pf); 
\draw[double, <->] (region) to node[auto] {Farkas' lemma} (orient); 
\path (region) to node[auto,swap] {[Thm.\ref{hyperplane bijection theorem}]} (orient); 
\draw[double, ->] (order) to node[auto,swap]
{\parbox[h]{1.4in}{\centering $\nu$-compatibility\\[3pt][Sect.~\ref{semiorders and
semiorientations}]}} (orient); 
\draw[double, ->] (orient) to node[auto,swap]
{\parbox[h]{0.7in}{\centering $\mathrm{indeg}-1$\\[3pt][Thm.~\ref{main}]}} (pf); 
\end{tikzpicture}
\caption{Schematic diagram of results.}\label{schema}
\end{figure}
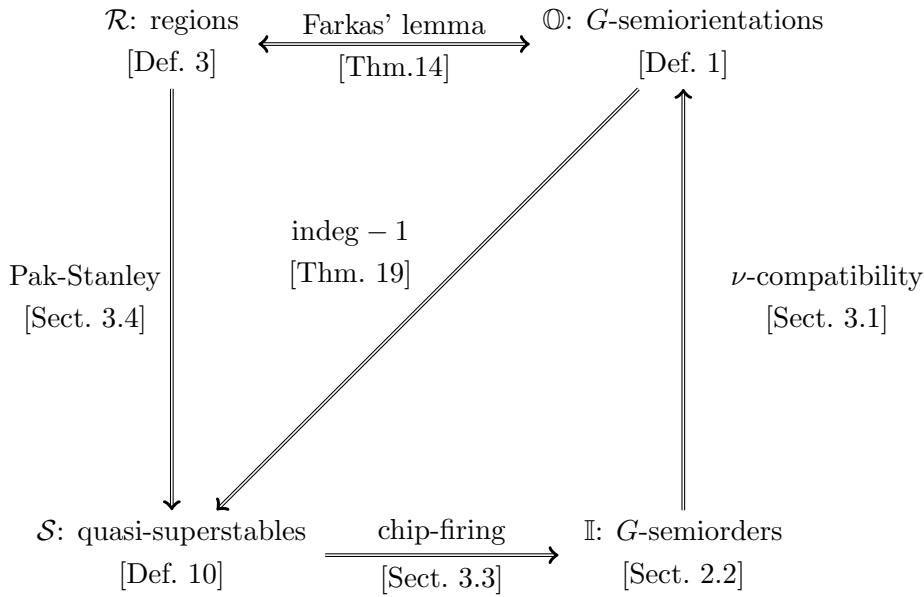
\medskip

\noindent{\bf Parking functions and superstable configurations.}  
Chip-firing is a key tool for us.  In the abelian sandpile model
for~$G$, one first chooses a vertex of $G$ to serve as the ``sink.''  Then placing
grains of sand (or chips) on each of the nonsink vertices defines a {\em
configuration}, $c$, on~$G$.  A vertex~$v$ is {\em unstable} in~$c$ if it has at
least as many grains of sand as its degree.  In that case, one may {\em fire}
(or {\em topple}) $v$ by sending one grain of sand from~$v$ to each of its
neighbors.  In this process neighbors of $v$ may become unstable, themselves.
Sand is not necessarily conserved under firing: grains of sand that are sent to
the sink vertex disappear.  For undirected, connected graphs---to which we limit
ourselves here---by repeatedly firing vertices, $c$ is eventually transformed
into a stable configuration, i.e., one with no unstable vertices.

Instead of firing one vertex at a time, one may consider a firing rule that
allows sets of vertices to be fired simultaneously.  This gives rise to a
stronger type of stability and a corresponding set of {\em superstable}
configurations (see Section~\ref{quasi-superstables}).  These superstable
configurations serve to define $G$-parking functions. A {\em $G$-parking
function} with respect to the chosen sink is a function from the vertices of $G$
to the integers whose value at the sink is $-1$ and whose values at the nonsink
vertices are the numbers of grains of sand for some superstable
configuration on $G$.  Thus, there is only a slight difference between
superstable configurations and $G$-parking functions, and at any rate, they are
in one-to-one correspondence.  Parking functions are more widely-known, and
they originally arose independently (for instance in \cite[p.~545]{knuth}). This 
accounts for why they, and not superstable configurations, are mentioned in the
title of this paper.  However, given the importance of chip-firing to our work,
we will mainly refer to superstable configurations outside of this introduction.

Although $G$-parking functions and superstable configurations are defined with
respect to a chosen sink vertex, we have found it natural and convenient to
first prove our results in a sinkless context, akin to working in projective
space rather than affine space.  To this end, we define generalized $G$-parking
functions, which we call {\em quasi-superstable divisors} on $G$.  These appear
in the bottom left corner of Figure~\ref{schema}, denoted by~$\mathcal{S}$.  As
indicated by the figure, quasi-superstables are formally defined in
Definition~\ref{def:quasi-superstable}.

\noindent{\bf Semiorders and the semiorder arrangement.} The {\em semiorder
arrangement}, \cite{stanley}, is the set of $n(n-1)$ hyperplanes in $\R^n$ given by
\[
x_i-x_j=1,\quad i,j\in\{1,\dots,n\}, \ i\neq j.
\]
Its regions are in bijection with certain $n$-element posets called {\em
semiorders}.  

A {\em $G$-semiorder} is a semiorder whose elements are the vertices of $G$.
The collection of $G$-semiorders, denoted by~$\mathbb{I}$, appears in the bottom
right corner of Figure~\ref{schema}.  In the same way that a Shi arrangement is
modified by Duval, Klivans, and Martin in~\cite{duval} to take into account the
structure of a graph, we modify a semiorder arrangement to produce the {\em
$G$-semiorder arrangement} associated with $G$.  The regions of the
$G$-semiorder arrangement are denoted by $\mathcal{R}$ in the top left corner of
Figure~\ref{schema}.

Fixing a sink vertex, $v$, we then refine the definition of a $G$-semiorder
arrangement to get the {\em $(G,v)$-semiorder arrangement}
(Definition~\ref{sink semiorder arrangement}).  
\medskip

\noindent{\bf Orientations.} A {\em partial orientation} of $G$ consists of
orienting some, not necessarily all, of the edges of $G$.  The {\em
$G$-semiorientations}, denoted by~$\mathbb{O}$ in the top right of
Figure~\ref{schema}, are the partial orientations satisfying an extra condition
relative to the cycles of the underlying graph, $G$. 
\medskip

\noindent{\bf Relations among the structures.} 
Using the method of Pak and Stanley, we label each region of the
$(G,v)$-semiorder arrangement with what amounts to a function from the vertices
of $G$ to the integers taking the value $-1$ at $v$.  The {\bf main goal of
this paper} is Theorem~\ref{thm:affine labeling}, which, along with
Theorem~\ref{thm:Pak-Stanley}, shows that the set of Pak-Stanley labels that are
negative only at the sink are exactly the set of $G$-parking functions.  The
set of Pak-Stanley labels consists of only $G$-parking functions exactly when
the sink is adjacent to every nonsink vertex.  

Our main goal, just described, is a consequence of the analogous result in the
nonsink context, represented by the left-most vertical arrow in
Figure~\ref{schema}.  This arrow represents the fact that the Pak-Stanley
labels for the regions of the $G$-semiorder arrangement form the set of
quasi-superstables, which is proved in Theorem~\ref{thm:labeling}.  This
theorem is proved by establishing the relations represented by the other arrows
in Figure~\ref{schema}.

In working on the $G$-Shi conjecture of Duval, Klivans, and Martin, we were led
to labeling the regions of the $G$-Shi arrangement with partial orientations of
$G$.  We used Farkas' lemma to develop a rule determining which partial orientations
would appear as labels.  However, it seemed that the criterion we developed was
awkward, and that it would become much simpler and symmetric if we altered the
hyperplane arrangement somewhat.  This is how we were led to consider
$G$-semiorder arrangements.  The resulting correspondence between the regions of
the $G$-semiorder arrangement and their labels with partial orientations (the
$G$-semiorientations) is represented by the top horizontal arrow in
Figure~\ref{schema} and is established in Theorem~\ref{hyperplane bijection
theorem}.

The correspondence between maximal superstable configurations---those containing
the most amount of sand---and acyclic orientations of the edges of $G$ has been
noted several times, in different forms (\cite{benson}, \cite{biggs},
\cite{goles}, \cite{greene}).  In~\cite{benson}, the correspondence is a
consequence of an extended version of Dhar's burning algorithm, a tool from the
chip-firing literature,~\cite{dhar2}.  The input to their extended algorithm is
a maximal superstable configuration, $c$.  The configuration is transformed into
an unstable configuration $\tilde{c}$.  The sequence of vertex firings that
stabilizes~$\tilde{c}$ is then used to orient the edges of $G$.  We modify this
algorithm in Section~\ref{ss-alg} to apply to all superstable configurations,
not just the maximal ones.  This algorithm is encoded (in its nonsink version)
as the bottom horizontal and right vertical arrows in Figure~\ref{schema} .

Ultimately, we label each region of the $G$-semiorder arrangement with (i) a
$G$-semiorientation, (ii) semiorders on the vertices of $G$, and (iii) a
quasi-superstable divisor on $G$.  Our aim---to show that all
quasi-superstables appear as labels---is achieved by showing that the upper-left
and the lower-right triangles in Figure~\ref{schema} commute and that the
mapping referred to as ``${\mathrm{indeg}-1}$'' along the diagonal in
Figure~\ref{schema} is surjective, (cf.~Theorem~\ref{thm:labeling},
Corollary~\ref{cor-to-main}, and Theorem~\ref{main}, respectively).  
\medskip

\noindent{\bf Organization.}  Following this introduction, the paper is
organized into four main sections and a conclusion.
Section~\ref{section:example} is an extended example that illustrates our main
results and may serve as a foundation for understanding them.  The rest of the
paper may be considered a justification of the claims made there.
Section~\ref{section:structures} defines our four main graphical structures.
Section~\ref{section:correspondences} contains the central results describing
the correspondences between these structures, as discussed above.
Section~\ref{sink} explains how to transfer the results of
Section~\ref{section:correspondences} to the context in which a sink vertex is
chosen.  The concluding section presents a conjecture and suggests further
lines of inquiry.  Imagine starting with the $G$-semiorder arrangement but then
perpendicularly displacing the hyperplanes (replacing the original hyperplanes
with parallel ones) before applying the labeling method of Pak and Stanley.  As
one slides the hyperplanes, some regions disappear and new regions form. We
conjecture that as long as a ``central region'' is preserved, the resulting set
of labels does not change.  A special case implies the $G$-Shi conjecture of
Duval, Klivans, and Martin.
\medskip

\noindent{\bf Acknowledgments.}  We thank Art Duval, Caroline Klivans, and
Jeremy Martin for encouraging us to work on the $G$-Shi conjecture and for
helpful comments.  We thank Collin Perkinson for help with proofreading.  We
also especially thank the anonymous referee, who helped to improve the quality
of our exposition.

\section{Introductory example}\label{section:example}
We introduce our main results with an example, beginning with
an explanation of the construction of Figure~\ref{fig:example}.  Consider the
arrangement of six planes in $\R^3$,
\[
x_i-x_j=1,\quad i,j\in\{1,2,3\},\ i\neq j.
\]
The complement of these planes in $\R^3$ consists of $19$ connected components.
The planes are all parallel to the vector $(1,1,1)$, so for our purposes
it suffices to intersect the arrangement with the perpendicular plane,
$x_1+x_2+x_3=0$, as pictured in Figure~\ref{fig:example}.  Our goal here is to
explain the labeling of the $19$ regions by vertex-labeled, partially oriented
graphs.

To start, each region contains a copy of the graph,~$G$,
pictured in Figure~\ref{fig:graph} after removal of the sink vertex, $q$.
\begin{figure}[ht] 
\begin{tikzpicture}
\SetVertexMath
\GraphInit[vstyle=Art]
\SetUpVertex[MinSize=3pt]
\SetVertexLabel
\tikzset{VertexStyle/.style = {%
shape = circle,
shading = ball,
ball color = black,
inner sep = 2pt
}}
\SetUpEdge[color=black]
\Vertex[LabelOut,Lpos=90,
Ldist=.1cm,x=0,y=1]{v_3}
\Vertex[LabelOut,Lpos=180,
Ldist=.1cm,x=-0.75,y=0]{v_1}
\Vertex[LabelOut,Lpos=0,
Ldist=.1cm,x=0.75,y=0]{v_2}
\Vertex[LabelOut,Lpos=270,
Ldist=.1cm,x=0,y=-1]{q}
\Edges(v_3,v_1,q,v_2,v_3)
\Edges(v_1,v_2)
\end{tikzpicture}
\caption{Graph $G$.}\label{fig:graph}
\end{figure}
The vertices of the remaining triangle are labeled by integers, and some of the
edges of the triangle are oriented.  We refer to the vertex labels as vectors,
$(c_1,c_2,c_3)$, where $c_i$ is the label for~$v_i$, and designate oriented
edges as ordered pairs of vertices, $(u,v)$, where $u$ is the tail and $v$ is
the head.  If there were more room, each region would be labeled by the full
graph, $G$.  In each region, the label on $q$ would be $-1$, and each edge
incident on $q$ would be oriented with tail at $q$.
\begin{figure} 
  \includegraphics[width=5in]{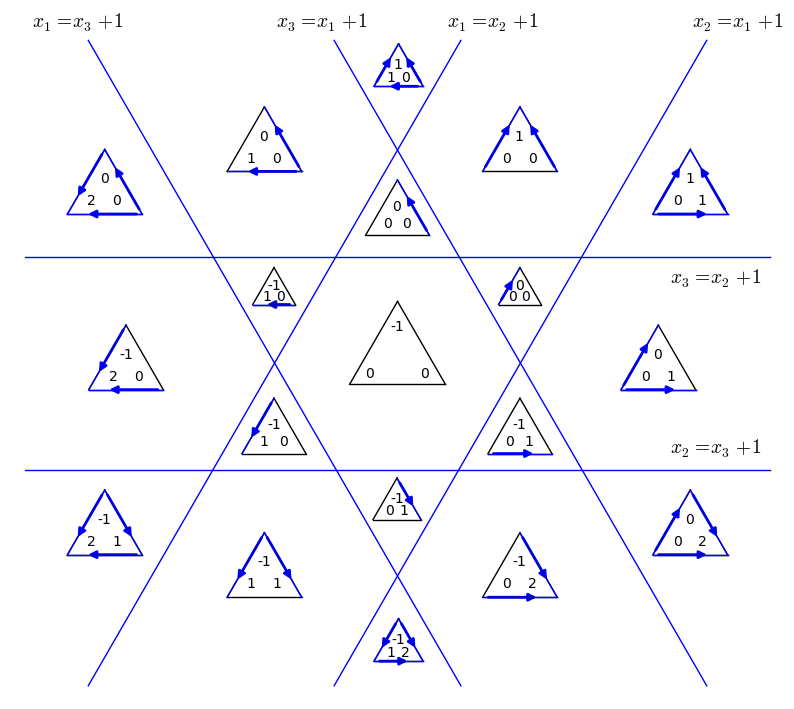}
\caption{Labeled $(G,q)$-semiorder arrangement.}\label{fig:example}
\end{figure}

The labeling of the regions is done inductively, starting at the center.  In
our example, the central region of the arrangement is a hexagon, and its
triangle has vertex labels $(0,0,-1)$ and no oriented edges (besides the
assumed ones from the sink).  The rule for the central region is that a
vertex~$v_i$ has a label of $0$ if $\{q,v_i\}$ is an edge; otherwise the label
is $-1$.  More generally, the rule for any region is that the vertex~$v_i$ is
labeled by one less then the number of oriented edges pointing into~$v_i$.

The central region shares an edge with six bordering regions.  Let $r$ denote
one of these regions, and say that $x_j-x_i=1$ is the border between it and the
central region.  Moving from the central region to $r$, one may think of the
value of $x_j$ as increasing at the expense of $x_i$.  We record this fact by
orienting the edge $\{v_i,v_j\}$ as $(v_i,v_j)$ and increasing the vertex label
for~$v_j$ by one.  Label the five remaining bordering regions similarly. For
example, moving into the compact region directly above the central region, we
cross into the region where $x_3>x_2+1$.  Thus, the label for this compact
region adds an oriented edge $(v_2,v_3)$, and the label for $v_3$ increases from
$-1$ to $0$.

At this point in the labeling process, there would be six unlabeled regions
bordering the six that were just labeled.  Let $r$ be one of these unlabeled
regions, and let $x_{\ell}-x_k=1$ be the edge it shares with a region $r'$ that
was just labeled.  In our case, there are two choices for $r'$, but this choice
does not affect the eventual label for $r$.  To label $r$, start with the
label for~$r'$, add the oriented edge $(x_k,x_{\ell})$, and increase the
vertex label for~$v_{\ell}$ by $1$.  After labeling these six regions, there
are six remaining unlabel regions, and these are labeled by continuing the
procedure just described.

In the end, each region is labeled with a copy of $G$ having labeled vertices
and a partial orientation of its edges.  The resulting partial orientations have
a special property.  Call an unoriented edge ``blank.'' Consider a collection of
edges,~$C$, forming a cycle in $G$.  Under any partial orientation, some of the
edges in $C$ will have an orientation.  If it is possible to orient the
remaining blank edges in $C$ to get a directed cycle, then the special property
is that there must be a greater number of blank edges in $C$ than oriented
edges: ``more blanks than arrows for potential directed cycles.''  For example,
consider the label for the region just above the central region.  It has one
oriented edge, $(v_2,v_3)$.  By orienting the two blank edges as $(v_1,v_2)$ and
$(v_3,v_1)$, we would get a directed cycle, but this potential cycle has two
blanks and only one arrow.  Theorem~\ref{hyperplane bijection theorem}
guarantees that the $19$ regions are in bijection with partial orientations
of~$G$ having this special property. 

As mentioned above, the vertex labels are given as one less than the indegree at
each vertex.  In our example, there are eight distinct vertex labels with
nonnegative entries:
\[
(0,0,0), (1,0,0), (0,1,0), (0,0,1), (2,0,0), (1,0,1), (0,2,0), (0,1,1).
\]
The surjectivity of the map $\psi$ in Theorem~\ref{main} ultimately implies,
through Theorem~\ref{thm:affine labeling}, that appending~$-1$ for the sink,
$q$, to each of these labels yields the set of $G$-parking functions (with
respect to $q$).  As they are, these labels are exactly the superstable
configurations for the abelian sandpile model on~$G$.  Subtracting each from the
maximal stable configuration, $(2,2,1)$, gives the recurrent configurations,
i.e., the elements of the sandpile group:
\[
(2,2,1), (1,2,1), (2,1,1), (2,2,0), (0,2,1), (1,2,0), (2,0,1), (2,1,0).
\]

Now pick any region $r$ and a point $t=(t_1,t_2,t_3)\in r$.  The point
determines a collection of closed intervals, $I_i=[t_i,t_i+1]$.  Turn the set of
intervals into a poset, $P$, by saying $I_i<I_j$ if $I_i$ lies completely to the left
of $I_j$ with no overlap, i.e., if $t_i+1<t_j$.  Overlapping intervals are not
comparable in $P$.   Posets arising from finite sets of intervals, ordered in
this way, are called {\em semiorders}.  Identifying interval $I_i$ with vertex
$v_i$ gives a poset on the nonsink vertices, and we then set $q$ to be the
unique minimal element to get a poset on all of the vertices.

For instance, the point $p=(1,3,2.5)$ is in the unbounded region directly to
the right of the central region in Figure~\ref{fig:example}, giving the
collection of intervals:
\[
\includegraphics[width=3in]{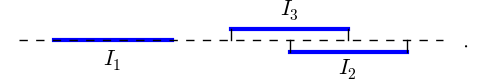}
\]
The Hasse diagram for the resulting semiorder on the vertices of $G$ is shown in
Figure~\ref{fig:semiorder example}.
\begin{figure}[h]
\begin{tikzpicture}
\SetVertexMath
\GraphInit[vstyle=Art]
\SetUpVertex[MinSize=3pt]
\SetVertexLabel
\tikzset{VertexStyle/.style = {%
shape = circle,
shading = ball,
ball color = black,
inner sep = 2pt
}}
\SetUpEdge[color=black]
\Vertex[LabelOut,Lpos=180,
Ldist=.1cm,x=-0.75,y=2]{v_2}
\Vertex[LabelOut,Lpos=0,
Ldist=.1cm,x=0.75,y=2]{v_3}
\Vertex[LabelOut,Lpos=180,
Ldist=.1cm,x=0,y=1]{v_1}
\Vertex[LabelOut,Lpos=180,
Ldist=.1cm,x=0,y=0]{q}
\Edges(v_3,v_1,v_2)
\Edges(q,v_1)
\end{tikzpicture}
\caption{Semiorder determined by $(1,3,2.5)\in\R^3$.}\label{fig:semiorder example}
\end{figure}
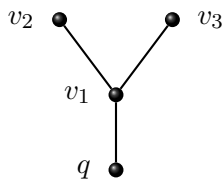

The partial orientation of the label for region $r$ can then be read from the
semiorder: $(v_i,v_j)$ appears as an oriented edge if and only if $\{v_i,v_j\}$
is an edge of $G$ and $v_i<v_j$ in $P$.  In general, varying the point $p$
selected in~$r$ may result in different semiorders: they may disagree for pairs
of vertices that do not determine an edge of $G$.  (In our example, each pair of
the $v_i$ form an edge, so only one poset on the vertices arises from each
region.) 

How does this example generalize to an arbitrary graph, $G$?  Suppose~$G$ has
designated sink $q$ and has nonsink vertices $v_1,\dots,v_n$.  Form the
arrangement of hyperplanes, $x_i-x_j=1$ for all $i\neq j$ such that
$\{v_i,v_j\}$ is an edge of $G$.  Thus, for each edge, $\{v_i,v_j\}$, there is a
``stripe'' consisting of the two hyperplanes $x_i-x_j=\pm1$.  The partial
orientations in our labeling record on which side of each stripe a given region
lies.

Label the central region, for which $|x_i-x_j|<1$ for all edges $\{v_i,v_j\}$,
by the graph $G$, orienting the edges incident with $q$ so that they point away
from $q$.  Label the $v_i$ by the number of oriented edges pointing into~$v_i$
minus~$1$ (hence, by $0$ or $-1$), and label~$q$ with~$-1$.  Then proceed
inductively to label all of the regions, as in the example.  In the end, the
regions will be in bijection with those partial orientations satisfying the
property that for each potential directed cycle, there are more blanks than
arrows.  The collection of vertex labels with nonnegative values at each $v_i$
are the $G$-parking functions, which correspond to the elements of the sandpile
group.  Picking a point in any region determines a semiorder on the vertices of
$G$, from which one may reconstruct the labeling of $G$ for the region.

We now describe a version of the above construction that avoids an initial
choice of a sink. Let $G$ be a graph with vertices $\{v_0,\dots,v_n\}$.  Form
the hyperplane arrangement as described above: $x_i-x_j=1$ for each $i\neq j$
such that $\{v_i,v_j\}$ is an edge of $G$.  Thus, there are two hyperplanes for
each edge.  Label the central region---given by $|x_i-x_j|<1$ for all $i,j$ such
that $\{i,j\}$ is an edge of $G$---with a copy of $G$ having no oriented edges and
with a $-1$ at each vertex.  Proceed as before, labeling each region.  By
Theorem~\ref{parking functions}, those regions for which $v_i$ has label
$-1$, and all other vertices have nonnegative labels, are the $G$-parking
functions with respect to~$v_i$.

For example, again take $G$ to be the graph in Figure~\ref{fig:graph} but with
no vertex chosen as sink (take $q=v_0$).  The corresponding hyperplane
arrangement consists of ten hyperplanes in $\R^4$.  Each hyperplane is parallel
to the vector $(1,1,1,1)$, so we project onto the hyperplane given by
$\sum_{i=0}^3x_i=0$, which we identify with $\R^3$, to get an arrangement whose
$109$ regions are in bijection with those of the original arrangement.  The
central region is a polytope with ten faces.  It is pictured in
Figure~\ref{fig:center} along with two of its bordering regions.  The bordering
region forming a pyramid on top of the central region is labeled by a copy of
$G$ with one directed edge, $(v_2,v_1)$, and with vertex label $(-1,0,-1,-1)$.
The other bordering region in the figure is labeled by a copy of $G$ with one
directed edge, $(v_3,v_1)$, and with the same vertex label, $(-1, 0,-1,-1)$.
\begin{figure}
  \includegraphics[width=2in]{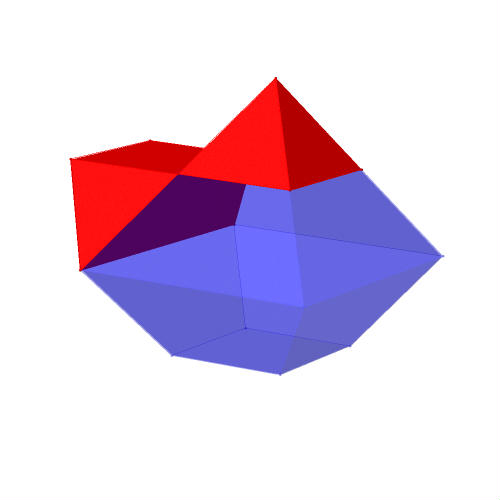}
  \caption{The central region and two bordering regions for
  $G$.}\label{fig:center}
\end{figure}
Figure~\ref{fig:choice} depicts those (unbounded) regions of the hyperplane
arrangement for $G$ that satisfy $x_i>x_0+1$ for $i=1,2,3$. They are in
bijection with the regions in Figure~\ref{fig:example}, and corresponding
regions would have the same labels.  (Recall that for convenience the drawing of
the sink vertex and its edges is suppressed in Figure~\ref{fig:example}.)
\begin{figure}
  \includegraphics[width=3in]{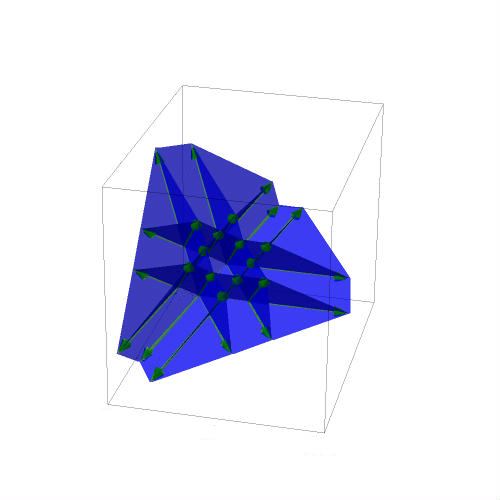}
  \caption{Regions for $G$ corresponding to choosing a sink vertex.}\label{fig:choice}
\end{figure}

\section{Four structures associated with \texorpdfstring{$G$}{G}}\label{section:structures}
From now on, we take $G$ to be a finite, connected, undirected graph,
with vertices $V=\{v_0,\dots,v_n\}$ and edges $E$.  Loops and multiple edges are
disallowed (the former for convenience of notation).

\subsection{\texorpdfstring{$G$}{G}-semiorientations.}  In this section we define $\mathbb{O}$, the collection of $G$-semiorientations.
A {\em partial orientation} of~$G$ is a choice of directions for a subset of the
edges of $G$.  Formally, a partial orientation is a subset $\mathcal{O}\subset
V\times V$ with the property that if $(u,v)\in\mathcal{O}$, then $\{u,v\}\in E$ and
$(v,u)\notin\mathcal{O}$.  Let~$\mathcal{O}$ be a partial orientation.  If
$e=\{u,v\}\in E$ and $(u,v)\in\mathcal{O}$, then despite the ambiguity, we write
$e\in\mathcal{O}$ and say $e$ is oriented.  In that case, we think of~$e$ as an arrow from $u$ to $v$
and write $e^{-}=u$ and $e^{+}=v$.  If neither $(u,v)$ nor $(v,u)$ is in
$\mathcal{O}$, we write $e\notin\mathcal{O}$ and say that $e$ is an unoriented
or {\em blank} edge.  The {\em outdegree} of the vertex $u\in V$ relative to
$\mathcal{O}$, denoted $\mathrm{outdeg}_{\mathcal{O}}(u)$, is the number of
edges $e\in\mathcal{O}$ such that $e^{-}=u$.  Similiarly, the {\em indegree} of
$u\in V$ relative to $\mathcal{O}$, denoted $\mathrm{indeg}_{\mathcal{O}}(u)$, is
the number of edges $e\in\mathcal{O}$ such that $e^{+}=u$.  We use the notation
$\mathrm{deg}(u)$ to denote the ordinary degree of $u$, i.e., the number of $e\in
E$ containing $u$.  Fixing $\mathcal{O}$, some of the edges of any cycle
$C\subseteq E$ of $G$ will be oriented and others will be blank.  If it is
possible to assign directions to the blank edges so that $C$ would become a
directed cycle, then we call~$C$ a {\em potential cycle} for $\mathcal{O}$.

\begin{definition}\label{semiorientation}
  A {\em $G$-semiorientation} is a partial orientation, $\mathcal{O}$, such that each potential cycle for $\mathcal{O}$ has more blank edges than oriented edges. The set of $G$-semiorientations of $G$ is denoted $\mathbb{O}$.
\end{definition}

\subsection{\texorpdfstring{$G$}{G}-semiorders.}\label{semiorder}  A reference for ordinary
semiorders is \cite{stanley}.  Let $k$ be any nonnegative integer, and consider
a collection of unit length closed intervals, $P=\{I_1,\dots,I_k\}$, of the real
line.  Order the elements of $P$ by $I_i<I_j$ if~$I_i$ lies strictly to the left
of~$I_j$, i.e., if $I_i=[a_i,a_i+1]$ and $I_j=[a_j,a_j+1]$, then $a_i+1<a_j$.
Any poset isomorphic to a poset~$P$, constructed as above, is called a {\em
semiorder}.  The number of non-isomorphic semiorders with~$k$ elements is the
$k$-th Catalan number,~$C_k$, and there is a corresponding generating function
\[
C(x)=\sum_{k\geq0}C_nx^k=\frac{1-\sqrt{1-4x}}{2x}.
\]
If $f_k$ denotes the number of {\em labeled} semiorders,  there is the exponential generating function
\[
\sum_{k\geq0}f_k\frac{x^k}{k!}=C(1-e^{-x}).
\]

\begin{definition}
  A {\em $G$-semiorder} is a semiorder on the vertices of $G$. The set of
  $G$-semiorders is denoted $\mathbb{I}$.  
\end{definition}

\subsection{The \texorpdfstring{$G$}{G}-semiorder arrangement.}
\begin{definition}\label{semiorder arrangement}
The {\em $G$-semiorder arrangement}, denoted $\mathscr{I}$, is the set of
$2|E|$ hyperplanes in $\R^{n+1}$ given by
\[
x_i-x_j=1,
\]
for all $i\neq j$ such that $\{v_i, v_j\}\in E$.    The {\em regions} of
$\mathscr{I}$, denoted $\mathcal{R}$, are the connected components of
$\R^{n+1}\setminus\mathscr{I}$.
\end{definition}
If $G$ were the complete graph on $n+1$ vertices, the $G$-semiorder arrangement
would be the ordinary semiorder arrangement discussed in \cite{stanley}, whose
regions are in bijection with labeled semiorders on $n+1$ elements.  For
general~$G$, each region of the $G$-semiorder arrangement is a union of regions
from the ordinary semiorder arrangement.

\subsection{Quasi-superstables on \texorpdfstring{$G$}{G}.}\label{quasi-superstables} Now
designate $q:=v_0$ as the {\em sink} vertex.  We recall the basic facts about
the abelian sandpile model on~$G$, including $G$-parking functions.  A reference
for the sandpile results stated here is~\cite{HLPW}. (More references for the
abelian sandpile model:~\cite{BTW} and~\cite{dhar} are seminal;~\cite{dhar2} and
\cite{PW} are general references; and~\cite{LP} is a quick
overview.)  We then proceed to  define $\mathcal{S}$, the set of
quasi-superstable divisors of~$G$.

Let $D=\mathrm{diag}(\mathrm{deg}\,v_0,\dots,\mathrm{deg}\,v_n)$, and let $A$ be
the adjacency matrix for~$G$, defined by 
\[
A_{ij}=
\begin{cases}
  1&\text{if $\{v_i,v_j\}\in E$},\\
 \hfill 0&\text{otherwise}.
\end{cases}
\]
The {\em Laplacian} matrix for $G$ is
\[
\Delta = D-A.
\]
The {\em reduced Laplacian} is the matrix $\widetilde{\Delta}$ obtained by removing
the first row and column of $\Delta$, i.e., the row and column corresponding to
the sink vertex.

A {\em configuration} on $G$ is an element of the free abelian group on the
nonsink vertices.  Having ordered the nonsink vertices, as above, we identify
the set of configurations with $\Z^n$ in the natural way:
$c=\sum_{i=1}^nc_i\,v_i\leftrightarrow(c_1,\dots,c_n)$.  A {\em divisor} on $G$
is an element of the free abelian group on all of the vertices of~$G$, which we
similarly identify with $\Z^{n+1}$.  Once a sink is chosen, we may consider
configurations as those divisors whose sink coefficient is $0$.

Given two configurations or two divisors $c$ and $c'$, we write $c\geq c'$ if
$c_i\geq c'_i$ for all $i$.  We say~$c$ is {\em nonnegative} if $c\geq 0$, i.e.,
if each component of~$c$ is nonnegative.  

If~$X$ is a subset of the nonsink vertices, write $1_X$ for the configuration
whose $i$-th component is $1$ if $v_i\in X$ and $0$ otherwise.  {\em Firing} $X$
from a configuration~$c$ results in the configuration
$c-\widetilde{\Delta}\,1_X$.  If $X=\{v_i\}$, we call this operation {\em firing
$v_i$}.  One may speak of {\em firing the sink vertex}, which adds~$1$ to each
vertex connected to the sink.  Since the sum of the columns of the Laplacian
matrix is zero, firing the set of all nonsink vertices is the same as {\em
reverse-firing} the sink, subtracting $1$ from each vertex attached to the
sink.

\begin{definition}
Let $c$ be a nonnegative configuration on $G$.  
We say $c$ is {\em stable} if there is no $i$ such that firing~$v_i$ from $c$
results in a nonnegative configuration.   We say that $c$ is {\em superstable} if
there is no nonempty subset~$X$ of the nonsink vertices such that firing $X$
from $c$ results in a nonnegative configuration. 
\end{definition}

The notion of a superstable configuration is essentially the same as that of a
parking function.
\begin{definition}
  A function $f\colon V\to\Z$ is a {\em $G$-parking function} (with respect to
  $q$) if $f(q)=-1$ and $(f(v_1),\dots,f(v_n))$ is a superstable configuration
  on~$G$.  We identify a $G$-parking function $f$ with the divisor
  $\sum_{v\in V}f(v)\,v$.
\end{definition}

Suppose $c$ is a nonnegative configuration.  Then $c$ is stable exactly when
$c_i<\mathrm{deg}\,v_i$ for all $i$.  A vertex~$v_i$ such
that~$c_i<\mathrm{deg}\,v_i$ is said to be {\em stable} in~$c$; otherwise it is
{\em unstable}.  Firing a set of vertices is {\em legal} from $c$ if the
resulting configuration is nonnegative.  In particular, firing a single unstable
vertex of~$c$ is legal.  A sequence of vertices is called a {\em legal firing
sequence} for~$c$ if each vertex in the sequence is unstable after firing the previous
vertices in the sequence.  Since there is a path from each nonsink vertex to the
sink in~$G$, there is a legal firing sequence leading to a stable configuration
$c^{\circ}$ called the {\em stabilization of~$c$}.  This process is called {\em
stabilizing $c$}.  It turns out that~$c^{\circ}$ is independent of the order in
which unstable vertices are fired, as is the number of times each vertex is fired
in reaching $c^{\circ}$.    
\begin{definition}
  A stable configuration $c\geq 0$ is {\em recurrent} if given any nonnegative
  configuration $a$, there exists a nonnegative configuration $b$ such that
  $c=(a+b)^{\circ}$.  The recurrent elements with the operation of (vertex-wise)
  addition followed by stabilization is called the {\em sandpile group} of $G$
  (with respect to $q$), denoted $\mathrm{Sand}(G)$.
\end{definition}

It is well-known that the sandpile group actually is a group and the mapping
\begin{align*}
  \mathrm{Sand}(G)&\to\Z^n/\mathrm{image}(\widetilde{\Delta})\\
  c&\mapsto c
\end{align*}
is an isomorphism.  Each equivalence class of $\Z^n$ modulo the image of the
reduced Laplacian contains a unique recurrent element.  It is also known that
each equivalence class contains a unique superstable element.  Define the
{\em maximal stable configuration} to be
\[
c_{\mathrm{max}}=\sum_{i=1}^n(\mathrm{deg}\,v_i-1)\,v_i.
\]
The next two propositions are well-known.
\begin{prop}[{\cite[Theorem~4.4]{HLPW}}]\label{duality}
The configuration $c$ is recurrent if and only if $c_{\mathrm{max}}-c$ is superstable.
\end{prop}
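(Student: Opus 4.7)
My plan is to translate both conditions into combinatorial criteria on the vertex values, observe that they coincide via Dhar's burning algorithm, and then use a cardinality argument to upgrade this to a bijection.

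Let $c \geq 0$ be a configuration and set $\tilde c := c_{\mathrm{max}} - c$, so $\tilde c_i = \mathrm{deg}\,v_i - 1 - c_i$. For a nonempty $X \subseteq \{v_1,\dots,v_n\}$, firing $X$ changes $c_i$ at each $v_i \in X$ by $-(\mathrm{deg}\,v_i - |N(v_i) \cap X|)$, since $(\widetilde{\Delta}\,1_X)_i = \mathrm{deg}\,v_i - |N(v_i)\cap X|$. Hence $c$ is superstable exactly when, for every nonempty $X$, some $v_i \in X$ satisfies $c_i < \mathrm{deg}\,v_i - |N(v_i) \cap X|$, i.e.,
\[
\tilde c_i \;\geq\; |N(v_i) \cap X|.
\]

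Next I would invoke the burning-algorithm characterization of recurrence (standard, and recorded in \cite{HLPW}): a stable configuration $\tilde c$ is recurrent iff for every nonempty $X \subseteq \{v_1,\dots,v_n\}$ there is some $v_i \in X$ with $\tilde c_i \geq |N(v_i) \cap X|$ --- the point being that such a $v_i$ receives enough chips from the already-burnt complement $V \setminus X$ to ``burn'' itself. The two criteria are now literally identical; it remains only to verify the nonnegativity/stability hypotheses. Since $c \geq 0$, we have $\tilde c_i \leq \mathrm{deg}\,v_i - 1$, so $\tilde c$ is stable; and superstability of $c$, applied to $X = \{v_i\}$, forces $c_i \leq \mathrm{deg}\,v_i - 1$, so $\tilde c \geq 0$.

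Thus $c \mapsto c_{\mathrm{max}} - c$ injects superstable configurations into recurrent configurations. Since both sets are in bijection with $\Z^n / \mathrm{image}(\widetilde{\Delta})$, a fact recalled just before the proposition, they have the same finite cardinality, so this injection must be a bijection, yielding both directions of the equivalence. The main obstacle is the burning-algorithm characterization of recurrence --- that is the genuinely substantive ingredient --- but once it is cited, the rest is an arithmetic substitution and a cardinality count.
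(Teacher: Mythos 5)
Your proof is correct. Note, however, that the paper does not prove this proposition at all: it is imported as a known result, cited to \cite[Theorem~4.4]{HLPW}, so there is no internal argument to compare against. Your argument is a sound standalone derivation, and it is essentially the standard one: translate superstability of $c$ into the condition that every nonempty $X$ contains a vertex $v_i$ with $\tilde c_i \geq |N(v_i)\cap X|$, and recognize this as the ``no forbidden subconfiguration'' form of Dhar's burning test. Two small remarks. First, the form of the burning criterion you invoke is not literally the paper's Proposition~\ref{dhar} (which is phrased as ``fire the sink, then every vertex fires exactly once in the stabilization''); the two are equivalent, but passing between them requires the short argument that the set of vertices which never fire is precisely a maximal forbidden set. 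Since you cite \cite{HLPW} directly rather than Proposition~\ref{dhar}, this is acceptable, but worth flagging. Second, your closing cardinality step is redundant: because the burning criterion you cite is already an equivalence, the translation you carried out gives both implications directly (recurrent configurations are stable and nonnegative by definition, so the hypotheses needed for the reverse direction are automatic). The counting argument would only be needed if you had access to just one direction of the burning test, so you may either drop it or keep it as an alternative route to the converse --- but as written it is harmless and the appeal to the uniqueness of superstable and recurrent representatives per class of $\Z^n/\mathrm{image}(\widetilde{\Delta})$ is consistent with how the paper presents those facts (as prior knowledge, not consequences of this proposition).
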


\begin{prop}[{Dhar's burning algorithm, \cite{dhar2}, \cite[Lemma~4.1]{HLPW}}]\label{dhar}
  Let $b\geq0$ be a stable configuration on $G$, and let $\tilde{b}$ be the
  configuration obtained from~$b$ by firing the sink.  The following are
  equivalent:
  \begin{enumerate}
    \item $b$ is recurrent,
    \item $(\tilde{b})^{\circ}=b$, i.e., the stabilization of $\tilde{b}$ is $b$,
    \item in stabilizing $\tilde{b}$, each nonsink vertex fires exactly once.
  \end{enumerate}
\end{prop}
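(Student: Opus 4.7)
My plan is to prove the three-way equivalence in two stages: first establishing $(2) \Leftrightarrow (3)$ as linear algebra over the reduced Laplacian $\widetilde{\Delta}$, and then closing the loop through $(1)$ via the abelian property of stabilization and the group structure $\Z^n/\mathrm{image}(\widetilde{\Delta})$.

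For $(2) \Leftrightarrow (3)$, set $\sigma \in \Z^n$ with $\sigma_i = 1$ if $v_i$ is a neighbor of $q$ and $0$ otherwise; note that $\widetilde{\Delta}\mathbf{1} = \sigma$ by direct computation, and since firing $q$ adds one grain to each of its neighbors we have $\tilde{b} = b + \sigma$. If each nonsink vertex fires exactly once in stabilizing $\tilde{b}$, the net effect is $-\widetilde{\Delta}\mathbf{1} = -\sigma$, so $(\tilde{b})^\circ = \tilde{b} - \sigma = b$, giving $(3) \Rightarrow (2)$. Conversely, if $(\tilde{b})^\circ = b$, the abelian property of stabilization yields a well-defined firing vector $x \in \Z_{\geq 0}^n$ for this stabilization satisfying $\widetilde{\Delta} x = \sigma = \widetilde{\Delta}\mathbf{1}$; the connectedness of $G$ makes $\widetilde{\Delta}$ invertible (a nonsingular $M$-matrix with entrywise positive inverse), forcing $x = \mathbf{1}$.

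For $(2) \Rightarrow (1)$, I would iterate the abelian identity $(c + c')^\circ = (c^\circ + c')^\circ$ to obtain $(b + k\sigma)^\circ = b$ for every $k \geq 0$. Given $a \geq 0$, I would then choose $y \in \Z_{>0}^n$ large enough (using entrywise positivity of $\widetilde{\Delta}^{-1}$) that $b' := b + \widetilde{\Delta} y - a \geq 0$; the identity $a + b' = b + \widetilde{\Delta} y$ places $a + b'$ in the equivalence class of $b$ modulo $\mathrm{image}(\widetilde{\Delta})$, and one confirms $(b + \widetilde{\Delta} y)^\circ = b$ by tracking the firing cascade encoded by $y$ using the abelian property and the iterated form of $(2)$. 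For the closing direction $(1) \Rightarrow (2)$, taking $a = \tilde{b}$ in the definition of recurrence produces $b' \geq 0$ with $b = ((\tilde{b})^\circ + b')^\circ$; uniqueness of the recurrent representative in each equivalence class of $\Z^n / \mathrm{image}(\widetilde{\Delta})$ then forces $(\tilde{b})^\circ = b$.

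The main obstacle is the legality subtlety in $(2) \Rightarrow (1)$: although $\widetilde{\Delta} y$ matches a formal firing of the vector $y$, one must confirm that such firings are realizable through a legal firing sequence starting from $b + \widetilde{\Delta} y$, which rests on the least-action principle of chip-firing. Once this is combined with the finiteness of $\mathrm{Sand}(G)$ and the uniqueness of recurrent representatives, the three statements fit together as claimed.
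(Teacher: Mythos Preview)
The paper does not prove this proposition at all: it is quoted as a standard result with citations to \cite{dhar2} and \cite[Lemma~4.1]{HLPW}, and the text even remarks that the statement is ``the theoretical underpinning'' of Dhar's algorithm rather than something established here. So there is nothing in the paper to compare your argument against; your proposal is essentially reconstructing the textbook proof that the cited references contain.

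That said, your sketch has one genuine gap, and it is not where you flagged it. Your implication $(1)\Rightarrow(2)$ reads: take $a=\tilde b$ in the definition of recurrence to get $b'\ge 0$ with $b=((\tilde b)^\circ+b')^\circ$, and then invoke ``uniqueness of the recurrent representative'' to force $(\tilde b)^\circ=b$. Uniqueness only applies once you know that $(\tilde b)^\circ$ is itself recurrent; being stable and lying in the class of $b$ is not enough, since a class contains many stable but non-recurrent configurations. The missing lemma is that if $b$ is recurrent and $a\ge 0$ then $(b+a)^\circ$ is again recurrent: for any $c\ge 0$ pick $b''$ with $b=(c+b'')^\circ$, and then $(b+a)^\circ=((c+b'')^\circ+a)^\circ=(c+(b''+a))^\circ$. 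Applying this with $a=\sigma$ gives that $(\tilde b)^\circ=(b+\sigma)^\circ$ is recurrent, after which your uniqueness step is valid.

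By contrast, the ``legality subtlety'' you worry about in $(2)\Rightarrow(1)$ is real but routine once you invoke least-action, and your plan there is sound. The equivalence $(2)\Leftrightarrow(3)$ via $\widetilde\Delta\mathbf{1}=\sigma$ and invertibility of $\widetilde\Delta$ is clean and correct.
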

\noindent {\em Remark.} We refer to Proposition~\ref{dhar} as Dhar's burning
algorithm, although it would more properly be called the theoretical
underpinning of the algorithm.
\begin{definition}
Let $K(G)$ be the graph $G$ with the addition of a new vertex~$\tilde{q}$ and
edges $\{\tilde{q},v_i\}$ for all vertices $v_i$, including $q=v_0$.  Set
$\tilde{q}$ as the sink vertex of~$K(G)$.  
\end{definition}
Thus, the set of nonsink vertices of $K(G)$ is $V$, and each of these is
connected to the sink, $\tilde{q}$, by an edge.  The divisors on $G$
are exactly the configurations on  $K(G)$.
\begin{definition}\label{def:quasi-superstable}
A divisor $c\in\Z^{n+1}$ on $G$ is called {\em quasi-superstable} if $c=
\tilde{c}-1_V$
for some superstable $\tilde{c}$ on $K(G)$.  The collection of quasi-superstable
divisors is denoted $\mathcal{S}$.
\end{definition}
For the relation between the superstables and quasi-superstables of $G$, see
Theorem~\ref{parking functions}~(\ref{ss/qss2}).

\section{Correspondences between the structures}\label{section:correspondences}
So far, we have defined the following structures on $G$:
\begin{align*}
  \mathbb{O}&:\quad\text{$G$-semiorientations},\\
  \mathbb{I}&:\quad\text{$G$-semiorders},\\
  \mathscr{I}&:\quad\text{the $G$-semiorder arrangement},\\
  \mathcal{S}&:\quad\text{quasi-superstable divisors on $G$}.
\end{align*}
In this section, we describe relations among these structures, culminating in
Theorems~\ref{main} and~\ref{thm:labeling}. 
\subsection{Semiorders and semiorientations}\label{semiorders and
semiorientations}
 \begin{definition}
   A $G$-semiorder $P$ and a $G$-semiorientation $\mathcal{O}$ are {\em
   compatible} if for each edge $e=\{u,v\}$ of $G$, we have that $u<v$ if and
   only if $(u,v)\in\mathcal{O}$.  Thus, if $e\notin\mathcal{O}$, then $u$ and
   $v$ are not comparable in $P$.
\end{definition}
Given a $G$-semiorder, $P$, define
\[
\mathcal{O}_P=\{(u,v): \text{$\{u,v\}\in E$ and $u<v$ in $P$}\}.
\]
\begin{thm}\label{compatibility}
  Let $P$ be a $G$-semiorder.  Then $\mathcal{O}_P\in\mathbb{O}$ and  $\mathcal{O}_P$ is the unique element of $\mathbb{O}$ compatible with $P$.
\end{thm}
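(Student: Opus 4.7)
The plan is to verify the three claims in turn, with the bulk of the work lying in showing $\mathcal{O}_P \in \mathbb{O}$.

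First, I would observe that $\mathcal{O}_P$ is a well-defined partial orientation: antisymmetry of the poset $P$ prevents both $(u,v)$ and $(v,u)$ from appearing in $\mathcal{O}_P$, and by construction every arrow sits on an edge of $G$. Compatibility with $P$ is then immediate from the definition of $\mathcal{O}_P$: for each edge $\{u,v\}$ we have $(u,v) \in \mathcal{O}_P$ iff $u < v$ in $P$, and if the edge is blank then neither $u < v$ nor $v < u$ holds in $P$, so $u$ and $v$ are incomparable. Uniqueness is also essentially tautological: any $\mathcal{O} \in \mathbb{O}$ compatible with $P$ must, on each edge $\{u,v\}$ of $G$, record the arrow $(u,v)$ precisely when $u<v$ in $P$, which forces $\mathcal{O} = \mathcal{O}_P$.

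The real content is verifying the potential-cycle condition. My plan is to exploit the unit interval representation of semiorders. Choose real numbers $a_v$ for each $v \in V$ so that the intervals $I_v = [a_v, a_v+1]$ realize $P$, meaning $u < v$ in $P$ iff $a_v - a_u > 1$, and $u,v$ are incomparable iff $|a_u - a_v| \leq 1$. Let $C$ be a potential cycle for $\mathcal{O}_P$, and orient its blank edges so that $C$ becomes a directed cycle $w_0 \to w_1 \to \cdots \to w_{k-1} \to w_0$. The cyclic telescoping sum
\[
\sum_{i=0}^{k-1} (a_{w_{i+1}} - a_{w_i}) = 0
\]
will be the workhorse. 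For each oriented edge $w_i \to w_{i+1}$ (of which there are, say, $O$), compatibility gives $a_{w_{i+1}} - a_{w_i} > 1$. For each blank edge $\{w_i, w_{i+1}\}$ (say $B$ of them), incomparability gives $a_{w_{i+1}} - a_{w_i} \geq -1$. Summing, the total strictly exceeds $O - B$, so $0 > O - B$, i.e., $B > O$, which is exactly the condition for $C$ to have more blanks than arrows.

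I do not anticipate a real obstacle: the argument reduces to the telescoping identity above once one picks a unit interval representation. The only subtlety to keep track of is the strictness in the oriented contributions versus the weak inequality in the blank contributions; these combine correctly because there is at least one oriented edge in any potential cycle (otherwise it would be blank and could not be a potential cycle in the sense that requires orienting blanks to form a \emph{directed} cycle — though the argument still works if $O=0$, since then trivially $B = k > 0 = O$). After that, I would simply package the three parts (semiorientation property, compatibility, uniqueness) into the conclusion.
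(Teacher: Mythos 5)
Your proposal is correct and follows essentially the same route as the paper: the easy parts (well-definedness, compatibility, uniqueness) are dispatched from the definitions, and the potential-cycle condition is proved by passing to a unit-interval representation of $P$ and summing the increments $a_{w_{i+1}}-a_{w_i}$ around the cycle, with oriented edges contributing more than $1$ and blank edges at least $-1$. Your explicit telescoping identity and the separate remark about the all-blank case are just a slightly more careful packaging of the paper's inequality $a_1+\delta-\beta\leq a_1$.
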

\begin{proof}
The only part of this theorem that is not immediate from the definitions is the
fact that every potential cycle for $\mathcal{O}_P$ has more blanks edges than
oriented edges, which we now prove. 
Let $\alpha=\{e_1,\dots,e_k\}$ be a potential cycle.  We may assume that
$e_i=\{u_i,u_{i+1}\}$ where $u_{k+1}=u_1$, and that for each $i$, either
(i) $(u_i,u_{i+1})\in\mathcal{O}_P$, in which case $u_i<u_{i+1}$, or (ii)~$e_i$
is a blank edge, in which case $u_i$ and $u_{i+1}$ are not comparable. Since $P$
is a semiorder, it is isomorphic to a semiorder of intervals, allowing us to
identify each $u_i$ with an interval $I_i=[a_i,a_i+1]$.  If
$e_i\in\mathcal{O}_P$, we have $I_i<I_{i+1}$, in which case $a_i<a_{i+1}-1$; and
if $e_i\notin\mathcal{O}_P$, then $I_i$ and $I_{i+1}$ overlap, so in particular,
$a_i\leq a_{i+1}+1$.
Thus, if there are
$\delta$ oriented edges and $\beta$ blank edges in $\alpha$, since
$u_{k+1}=u_1$, 
\[
a_1+\delta -\beta\leq a_1,
\]
with equality if and only if $\beta=\delta=0$.  However, since $\alpha$ has at
least one edge, the inequality must be strict, and $\beta>\delta$ as required.
\end{proof}

Thus, we can refer to {\em the} semiorientation determined by a
$G$-semiorder and define the mapping
\begin{align*}
  \nu: \mathbb{I}&\to\mathbb{O}\\
\nonumber P&\mapsto\mathcal{O}_P.
\end{align*}
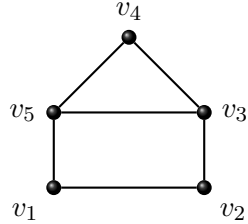
\begin{figure}[ht] 
\begin{tikzpicture}
\SetVertexMath
\GraphInit[vstyle=Art]
\SetUpVertex[MinSize=3pt]
\SetVertexLabel
\tikzset{VertexStyle/.style = {%
shape = circle,
shading = ball,
ball color = black,
inner sep = 2pt
}}
\SetUpEdge[color=black]
\Vertex[LabelOut,Lpos=225,x=0,y=0]{v_1}
\Vertex[LabelOut,Lpos=315,x=2,y=0]{v_2}
\Vertex[LabelOut,Lpos=0,x=2,y=1]{v_3}
\Vertex[LabelOut,Lpos=90,x=1,y=2]{v_4}
\Vertex[LabelOut,Lpos=180,x=0,y=1]{v_5}
\Edges(v_1,v_2,v_3,v_4,v_5,v_1)
\Edges(v_3,v_5)
\end{tikzpicture}
\caption{The house graph.}\label{fig:house graph}
\end{figure}
\begin{example} Let $G$ be the {\em house graph} of Figure~\ref{fig:house graph}.
  Figure~\ref{fig:compatible semiorientation} depicts a $G$-semiorder and its
  corresponding compatible $G$-semiorientation.
\end{example}

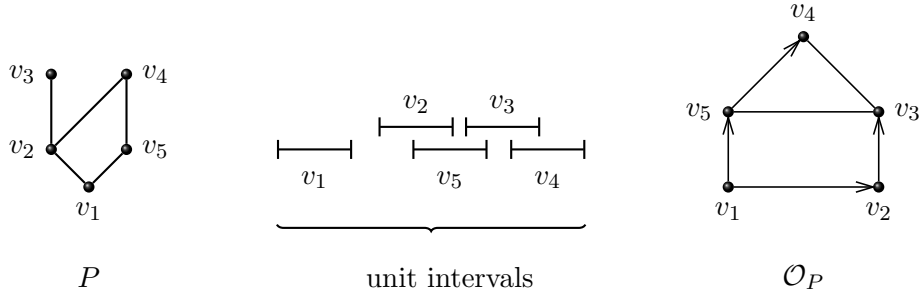
\begin{figure} 
\begin{tikzpicture}
\SetVertexMath
\GraphInit[vstyle=Art]
\SetUpVertex[MinSize=3pt]
\tikzset{VertexStyle/.style = {%
shape = circle,
shading = ball,
ball color = black,
inner sep = 1.5pt
}}
\SetVertexLabel
\SetUpEdge[color=black]

\Vertex[LabelOut,Lpos=180,x=0,y=0]{v_2}
\Vertex[LabelOut,Lpos=180,x=0,y=1]{v_3}
\Vertex[LabelOut,Lpos=0,x=1,y=0]{v_5}
\Vertex[LabelOut,Lpos=0,x=1,y=1]{v_4}
\Vertex[LabelOut,Lpos=270,x=0.5,y=-0.5]{v_1}
\Edges(v_3,v_2,v_1,v_5,v_4)
\Edge(v_4)(v_2)
\draw (0.5,-1.7) node{$P$};

\def\i{3.5}
\draw[{|-|},thick] (\i-0.5,0) -- (\i+1-0.5,0);
\draw[{|-|},thick] (\i+1.3,0) -- (\i+2.3,0);
\draw[{|-|},thick] (\i+2.6,0) -- (\i+3.6,0);
\draw[{|-|},thick] (\i+0.85,0.3) -- (\i+1.85,0.3);
\draw[{|-|},thick] (\i+2.0,0.3) -- (\i+3.0,0.3);
\draw (\i+0.5-0.5,-0.4) node{$v_1$};
\draw (\i+1.8,-0.4) node{$v_5$};
\draw (\i+3.1,-0.4) node{$v_4$};
\draw (\i+1.35,0.6) node{$v_2$};
\draw (\i+2.5,0.6) node{$v_3$};
\draw[decoration={brace,mirror},decorate,thick] (\i-0.5,-1.0) -- (\i+3.6,-1.0);
\draw (\i+1.8,-1.7) node{unit intervals};

\def\j{\i+5.5}
\def\k{-0.5}
\Vertex[LabelOut,Lpos=90,x=\j+1,y=2+\k]{v_4}
\Vertex[LabelOut,Lpos=180,x=\j+0,y=1+\k]{v_5}
\Vertex[LabelOut,Lpos=270,x=\j+0,y=0+\k]{v_1}
\Vertex[LabelOut,Lpos=270,x=\j+2,y=0+\k]{v_2}
\Vertex[LabelOut,Lpos=0,x=\j+2,y=1+\k]{v_3}
\Edge[style={->,>=mytip, semithick}](v_5)(v_4)
\Edge[style={->,>=mytip, semithick}](v_1)(v_2)
\Edge[style={->,>=mytip, semithick}](v_1)(v_5)
\Edge[style={->,>=mytip, semithick}](v_2)(v_3)
\Edge[style={semithick}](v_4)(v_3)
\Edge[style={semithick}](v_5)(v_3)
\draw (\j+1,-1.7) node{$\mathcal{O}_P$};
\end{tikzpicture}
\caption{A $G$-semiorder $P$ on the vertices of the house graph, $G$; a
collection of intervals realizing the semiorder; and the unique
$G$-semiorientation, $\nu(P)=\mathcal{O}_P$, compatible with
$P$.}\label{fig:compatible semiorientation}
\end{figure}

\subsection{A bijection between semiorientations and hyperplane regions}\label{hyperplane bijection}
We now define a mapping 
\[
\rho\colon\mathbb{O}\to\mathcal{R}.  
\] 
If $\mathcal{O}\in\mathbb{O}$, let $\rho(\mathcal{O})$ be the region
defined by the following inequalities: for each edge $e$ of $G$: 
\begin{itemize} 
  \item if $e=(v_i,v_j)\in\mathcal{O}$, then $x_j > x_i + 1$,
  \item if $e\notin\mathcal{O}$, then $|x_i-x_j|<1$.  
\end{itemize}

The reader may find it helpful to skim the example below the proof of the
following theorem before reading the proof itself.
\begin{thm}\label{hyperplane bijection theorem}
The mapping $\rho$ is a well-defined bijection.
\end{thm}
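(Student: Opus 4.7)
The plan is to exhibit an explicit inverse $\sigma\colon\mathcal{R}\to\mathbb{O}$ and verify that $\rho\circ\sigma=\mathrm{id}_{\mathcal{R}}$ and $\sigma\circ\rho=\mathrm{id}_{\mathbb{O}}$. Given a region $R\in\mathcal{R}$, I would define $\sigma(R)$ by recording, for each edge $\{v_i,v_j\}\in E$, which of the three mutually exclusive conditions $x_j>x_i+1$, $x_i>x_j+1$, or $|x_i-x_j|<1$ holds throughout $R$; this is well-defined because $R$ is a connected component of $\mathbb{R}^{n+1}\setminus\mathscr{I}$, and it produces a partial orientation by design. Once I verify (I) that $\sigma(R)\in\mathbb{O}$ for every $R$ and (II) that $\rho(\mathcal{O})\neq\emptyset$ for every $\mathcal{O}\in\mathbb{O}$, the compositions $\rho\circ\sigma$ and $\sigma\circ\rho$ are identities by construction, and $\rho$ is a bijection.

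Claim (I) I would dispatch by a telescoping cycle-sum argument, essentially identical to the proof of Theorem~\ref{compatibility}. For any $(x_0,\dots,x_n)\in R$ and any potential cycle $u_1,u_2,\dots,u_k,u_{k+1}=u_1$ of $\sigma(R)$, the strict inequalities $x_{u_{i+1}}-x_{u_i}>1$ on oriented edges and $x_{u_{i+1}}-x_{u_i}>-1$ on blank edges sum telescopically to $0>\delta-\beta$, where $\delta$ and $\beta$ count the oriented and blank edges of the cycle; this yields $\beta>\delta$, which is the semiorientation condition.

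The main obstacle is claim (II), and this is where Farkas' lemma enters. If $\rho(\mathcal{O})$ were empty, Farkas would produce nonnegative multipliers $\lambda_{ij}$ on each inequality $x_j-x_i>1$ from an oriented edge $(v_i,v_j)\in\mathcal{O}$, together with $\mu_{ij},\mu_{ji}$ on each pair $x_j-x_i>-1,\ x_i-x_j>-1$ from a blank edge $\{v_i,v_j\}$, not all zero, whose $x$-coefficients cancel and whose right-hand sides sum to a nonnegative number. I would interpret these multipliers as arc weights in an auxiliary digraph $H$ on $V$ having an arc $v_i\to v_j$ of weight $\lambda_{ij}$ for each oriented edge and two arcs $v_i\to v_j,\ v_j\to v_i$ of weights $\mu_{ij},\mu_{ji}$ for each blank edge; the cancellation of $x$-coefficients is precisely the condition that these weights form a circulation on $H$. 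By the standard flow-decomposition theorem, this nonzero circulation is a nonnegative combination of simple directed cycles of $H$, each contributing $\delta_c-\beta_c$ to the total right-hand side, where $\delta_c$ and $\beta_c$ count its oriented and blank arcs. Every simple directed cycle of $H$ is either a digon between the two arcs of a single blank edge (with $\delta_c-\beta_c=-2$) or, visiting at least three vertices, projects to a simple cycle of $G$ that is a potential cycle for $\mathcal{O}$, in which case the semiorientation hypothesis gives $\beta_c>\delta_c$. Thus every simple cycle contributes strictly negatively, forcing the total right-hand side to be strictly negative, contradicting the Farkas certificate. Hence $\rho(\mathcal{O})\neq\emptyset$, completing the bijection.
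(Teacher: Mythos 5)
Your proof is correct and follows essentially the same route as the paper: the key step in both is a Farkas certificate for infeasibility of $\rho(\mathcal{O})$, viewed as a nonnegative circulation on the auxiliary digraph and decomposed into simple directed cycles, each of which contributes negatively to the right-hand side because $\mathcal{O}$ is a semiorientation. The only cosmetic differences are that you invoke the flow-decomposition theorem where the paper peels off one cycle from a minimal-support certificate by hand, and you handle the easy direction ($\sigma(R)\in\mathbb{O}$) by a direct telescoping sum rather than a second application of Farkas' lemma.
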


\begin{proof}
  Let $\mathcal{O}\in\mathbb{O}$, and let $r=\rho(\mathcal{O})$.
The system of inequalities defining $r$ indicates on which side of each hyperplane
of $\mathscr{I}$ the region sits.  To show $r$ is a region of
$\mathscr{I}$, it suffices to show that $r$ is nonempty.

We define a directed, weighted graph $G'$ with the same vertices as $G$ in the
following manner: for each edge $e$ between vertices of $G$, 
\begin{itemize}
  \item if $e=(v_i,v_j)\in \mathcal{O}$, then $(v_i,v_j) \in G'$ and the weight
    of $(v_i,v_j)$ in $G'$ is $-1$,
\item if $e = \{v_i,v_j\}\notin \mathcal{O}$, then $(v_i,v_j), (v_j,v_i) \in G'$
  and the weight of both $(v_i,v_j)$ and $(v_j,v_i)$ in $G'$ is $1$.  
\end{itemize}

Choose an ordering of the edges of~$G'$: $e'_1, e'_2,\dots, e'_k$.  Define a
$k\times (n+1)$ edge-vertex adjacency matrix with rows $r_1,\dots, r_k$ as follows:
if $e'_{\ell} = (v_i, v_j)$, let $r_{\ell}$ be the vector having $1$ in the
$i$th entry, $-1$ in the $j$th entry, and $0$s elsewhere.  Let $b$ be a column
vector in $\R^{k}$ where $b_{\ell}$ is the weight of $e'_{\ell}$.

Thus, the inequalities of $\rho(\mathcal{O})$ are encoded as $Ax < b$. 
By Farkas' lemma the insolvability of $Ax<b$ is equivalent to the existence
of a row vector $y = (y_1, ..., y_k)$ satisfying:
\begin{equation}\label{farkas}
y_i\geq0\ \ \forall i,\quad y\neq0,\quad yA=0,\quad y\cdot b\leq0. 
\end{equation}
For sake of contradiction, suppose such a $y$ exists.
The {\em support} of $y$ is
\[
\mathrm{supp}(y)=\{i: y_i\neq0\}.
\]
Among all row vectors satisfying condition~(\ref{farkas}), suppose $y$ has been
chosen so that the cardinality of its support is minimal.
Say $\ell_1\in\mathrm{supp}(y)$ and $e'_{\ell_1}=(v_i,v_j)$.  Hence,
$r_{\ell_1}$ has a $-1$ in its $v_j$-th entry. Since
\[
yA = y_1\,r_1\dots+y_k\,r_k=0,
\]
and the components of $y$ are nonnegative, there must be some
$\ell_2\in\mathrm{supp}(y)$ such that $r_{\ell_2}$ has a~$1$ in its $v_j$-th entry.
This row will have a $-1$ in some other entry, forcing the existence of some
$\ell_3\in\mathrm{supp}(y)$ such that $r_{\ell_3}$ has a~$1$ in that entry, and
so on.
Since the support of $y$ is finite, the sequence $\ell_1,\ell_2,\dots,$
eventually has a repeat.  Thus, there is a sequence of elements
$j_1:=\ell_{m+1},j_2:=\ell_{m+2},\dots,j_t:=\ell_{m+t}$ in the support
of $y$ for some $t$ corresponding to a directed cycle of edges $e'_{j_1},\dots,e'_{j_t}$ in $G'$.

Let $z=(z_1,\dots,z_k)$ be the row vector with $z_{\ell}=1$ if $\ell\in\{j_1,\dots,j_t\}$ and
$z_{\ell}=0$, otherwise.
Since the support of $z$ corresponds to a directed cycle of edges in $G'$, we
have $zA = 0$. Furthermore, since any potential cycle in~$\rho(\mathcal{O})$ has
more blank edges than oriented edges, we have $z\cdot b > 0$.  Let
$a=\min\{y_{j_1},\dots,y_{j_t}\}$ and define $y'=y-a z$.  Then $y'$
satisfies condition~(\ref{farkas}) but its support is strictly contained in the
support of $y$, yielding a contradiction.  So there must
be some solution to $Ax<b$, which means that $r=\rho(\mathcal{O})\in\mathcal{R}$.

We now define a mapping
\[
\tau:\mathcal{R}\to\mathbb{O}.
\]
If $r\in\mathcal{R}$ let $\tau(r)$ be the partial orientation of $G$ obtained by
(i) $(v_i,v_j)\in\tau(r)$ if $\{v_i,v_j\}\in E$ and $x_j>x_i+1$ in $r$, and (ii)
all other edges of $G$ are blank.  Once we show~$\tau$ is well-defined, it is
immediate that it is the inverse of $\rho$.

Let $r\in\mathcal{R}$. To see that $\tau(r)\in\mathbb{O}$, suppose $\tau(r)$
has a potential cycle~$\alpha$ having at least as many oriented edges as blank
edges.  Define $A$ and~$b$ as above to encode the system of inequalities
that defines the region $r$ as $Ax < b$.  Let~$y$ be the row vector with $1$s in
the entries corresponding to the (oriented and blank) edges of~$\alpha$ and
$0$s elsewhere.  We have $y \geq 0$, $y \neq 0$, $yA = 0$, and $y \cdot b \leq
0$.  But by Farkas' lemma this means $Ax < b$ has no solutions, contradicting
the fact that $r\in\mathcal{R}$.  Thus, $\tau(r) \in \mathbb{O}$.
\end{proof}

\begin{example}  This example is intended to be read in conjunction with the
  proof of Theorem~\ref{hyperplane bijection theorem} and illustrates Farkas'
  lemma at work.  Consider the partial orientation, $\mathcal{O}$, of a triangle
  pictured in Figure~\ref{fig:triangle}.
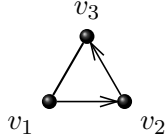
\begin{figure}[ht] 
\begin{tikzpicture}
\SetVertexMath
\GraphInit[vstyle=Art]
\SetUpVertex[MinSize=3pt]
\SetVertexLabel
\tikzset{VertexStyle/.style = {%
shape = circle,
shading = ball,
ball color = black,
inner sep = 2pt
}}
\SetUpEdge[color=black]
\Vertex[LabelOut,Lpos=225,x=0,y=0]{v_1}
\Vertex[LabelOut,Lpos=315,x=1,y=0]{v_2}
\Vertex[LabelOut,Lpos=90,x=0.5,y=0.866]{v_3}
\Edge[style={->,>=mytip,semithick}](v_1)(v_2)
\Edge[style={->,>=mytip,semithick}](v_2)(v_3)
\Edge(v_3)(v_1)
\end{tikzpicture}
\caption{Partial orientation, $\mathcal{O}$.}\label{fig:triangle}
\end{figure}
Naively attemtping to apply the region-labeling function,~$\rho$, to
$\mathcal{O}$ results in the system of inequalities $Ax<b$ where
\[
A =
    \left[
    \begin{array}{rrr}
      1&-1&0\\
      0&1&-1\\
      1&0&-1\\
      -1&0&1
    \end{array}
    \right],
\quad
x = 
    \left[
    \begin{array}{r}
      x_1\\x_2\\x_3
    \end{array}
    \right],
\quad
b=
    \left[
    \begin{array}{r}
     -1\\-1\\1\\1
    \end{array}
    \right].
\]
The rows of $A$ correspond to directed edges, $(v_1,v_2)$, $(v_2,v_3)$,
$(v_1,v_3)$, and $(v_3,v_1)$, respectively, of the underlying triangle,~$G$.
The last two inequalities combine to describe the ``sandwich'' $|x_1-x_3|<1$,
corresponding to the undirected edge, $\{v_1,v_3\}$.  

The partial orientation, $\mathcal{O}$, has a potential cycle with more oriented
edges than blanks, and hence is not a $G$-semiorientation.  By the way in which~$A$ is
defined in the proof of Theorem~\ref{hyperplane bijection theorem}, we know that
adding the rows of~$A$ corresponding to the directed cycle $(v_1,v_2)$,
$(v_2,v_3)$, $(v_3,v_1)$ gives the zero vector.  Our system of inequalities is
inconsistent since adding the corresponding entries of $b$ gives $-1$ (since
there are more oriented edges than blanks in $\mathcal{O}$ for this cycle).
\end{example}

Let $r$ be a region in the image of $\rho$. If $t=(t_0,\dots,t_n)\in r$, define
the unit intervals $I_i=[t_i,t_i+1]$ for $i=0,\dots,n$. Define $P_t$ to be the
$G$-semiorder determined by these intervals, labeled by the vertices of $G$ by
identifying $I_i$ with $v_i$.

\begin{thm}\label{region-semiorder}
  Let $\mathcal{O}\in\mathbb{O}$, and let
  $r=\rho(\mathcal{O})$.  The semiorders $P_t$ as~$t$ ranges
  over points in $r$ are exactly the $G$-semiorders compatible with
  $\mathcal{O}$.  
\end{thm}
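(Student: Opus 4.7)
The plan is to prove two containments: every $t \in r$ yields a $P_t$ compatible with $\mathcal{O}$, and conversely every semiorder compatible with $\mathcal{O}$ is realized as $P_t$ for some $t \in r$.

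The forward direction follows directly from the defining inequalities of $r = \rho(\mathcal{O})$. Fix $t \in r$ and an edge $\{v_i, v_j\} \in E$. If $(v_i, v_j) \in \mathcal{O}$, then $t_j > t_i + 1$, so $I_i < I_j$ in $P_t$, giving $v_i < v_j$. If instead $\{v_i, v_j\}$ is blank in $\mathcal{O}$, then $|t_i - t_j| < 1$, so $I_i$ and $I_j$ overlap, making $v_i$ and $v_j$ incomparable in $P_t$. These are exactly the conditions defining compatibility of $P_t$ with $\mathcal{O}$ on edges of $G$.

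For the converse, let $P$ be a $G$-semiorder compatible with $\mathcal{O}$, and let $T \subseteq \R^{n+1}$ denote the set of tuples $s = (s_0, \ldots, s_n)$ for which the intervals $[s_i, s_i+1]$ realize $P$ under the identification $s_i \leftrightarrow v_i$. Then $T$ is carved out by $s_j > s_i + 1$ whenever $v_i < v_j$ in $P$, together with $|s_i - s_j| \leq 1$ whenever $v_i$ and $v_j$ are incomparable in $P$. Because $P$ is a semiorder, $T$ is a nonempty convex polyhedron. The plan is to choose $t$ in the relative interior of $T$, where all the weak inequalities $|s_i - s_j| \leq 1$ become strict; then compatibility with $\mathcal{O}$ translates the inequalities of $T$ restricted to edges of $G$ into precisely the defining inequalities of $r$, so $t \in r$ while $P_t = P$.

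The main technical step is confirming that the relative interior of $T$ is nonempty --- equivalently, that $P$ admits a \emph{generic} interval representation in which $s_j \neq s_i + 1$ for all $i \neq j$. I will verify this by starting from an arbitrary representation of $P$ and applying a small perturbation $s_i \mapsto s_i + \epsilon_i$. The pairs $(i,j)$ with $s_j = s_i + 1$ cannot form a directed cycle, since a cycle of length $m \geq 1$ would force $s_{j_1} = s_{j_1} + m$; so one may choose the $\epsilon_i$ to respect a linear extension of this acyclic relation, thereby turning every boundary equality into the strict inequality required for incomparability while preserving every preexisting strict inequality (which is robust under sufficiently small perturbations). This yields the desired $t$ in the relative interior of $T$ and completes the argument.
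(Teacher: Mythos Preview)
Your proof is correct and follows the same two-containment structure as the paper's: translate the defining inequalities of $r$ into the compatibility conditions for the forward direction, and recover a point of $r$ from an interval representation of $P$ for the converse. The paper's converse simply picks \emph{any} interval representation $t$ of $P$ and asserts $t\in r$, writing that incomparability of $v_i$ and $v_j$ gives ``$|t_i-t_j|<1$''; strictly speaking, incomparability only yields $|t_i-t_j|\leq 1$ (the closed intervals may touch at an endpoint), so your perturbation argument to obtain a generic representation is a genuine refinement that patches a small gap the paper glosses over. One minor remark on that argument: when you say the $\epsilon_i$ should ``respect a linear extension'' of the acyclic relation $\{(i,j):s_j=s_i+1\}$, make explicit that you want $\epsilon_i>\epsilon_j$ whenever $(i,j)$ is such an edge, so that $s_j'-s_i'=1+(\epsilon_j-\epsilon_i)<1$; choosing the $\epsilon_i$ to be small and strictly decreasing along a topological sort accomplishes this.
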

\begin{proof}
Choose any $t\in r$.  Say $(v_i,v_j)\in\mathcal{O}$.  Then $x_j>x_i+1$
in $r$; so $t_j>t_i+1$, and hence, $v_i < v_j$ in $P_t$.  Now suppose $e =
\{v_i,v_j\}\in E$ but $e\notin\mathcal{O}$.  Then $|t_i-t_j|<1$, which means
that $I_i$ and $I_j$ overlap, and hence, $v_i$ and $v_j$ are not comparable in
$P_t$.  This shows that $P_t$ is compatible with $\mathcal{O}$.

Now let $P$ be a $G$-semiorder compatible with $\mathcal{O}$.  The semiorder $P$
is isomorphic to the semiorder on a set of unit intervals, $\{I_i\}_{i=0}^n$,
where $I_i$ corresponds to $v_i$. Say $I_i = [t_i,t_i+1]$ for each $i$, and
let $t = (t_0,\dots,t_i)$.  So $P=P_t$, but we must show that $t\in r$. Suppose
$e = \{v_i, v_j\} \in E$.  If $x_j > x_i + 1$ in $r$, then since
$\mathcal{O}=\tau(r)$, we have $(v_i,v_j)\in\mathcal{O}$, and thus $t_j>t_i+1$.
If $|x_i-x_j|<1$, then $e\notin\mathcal{O}$ and the intervals $I_i$ and $I_j$
overlap, i.e., $|t_i-t_j|<1$.  Hence, $t\in r$.
\end{proof}
\begin{example}  
  Let $G$ be the house graph pictured in Figure~\ref{fig:house graph}, and let 
  $\mathcal{O}$ be the $G$-semiorientation pictured on the right in
  Figure~\ref{fig:compatible semiorientation}.  The region
  $\rho(\mathcal{O})$ corresponding to $\mathcal{O}$ is determined by the system
  of inequalities: 
  \begin{align*}
   x_2&>x_1+1,\qquad x_3>x_2+1,\\
   x_4&>x_5+1,\qquad x_5>x_1+1,\\
  |x_3&-x_4|<1,\qquad|x_3-x_5|<1.
\end{align*}
Figure~\ref{fig:compatible semiorders} displays points
$t,t'\in\rho(\mathcal{O})$ whose corresponding semiorders, $P_t$ and
$P_{t'}$, are the two $G$-semiorders compatible with $\mathcal{O}$.
\end{example}
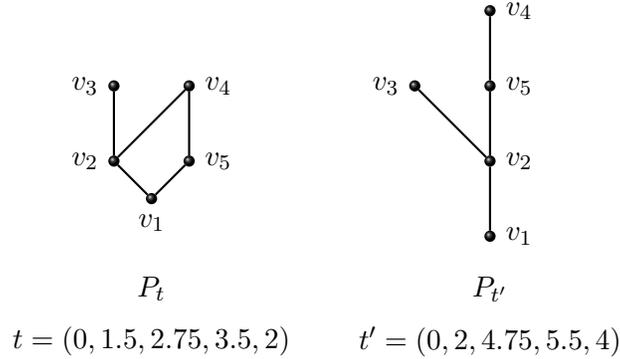
\begin{figure} 
\begin{tikzpicture}
\SetVertexMath
\GraphInit[vstyle=Art]
\SetUpVertex[MinSize=3pt]
\tikzset{VertexStyle/.style = {%
shape = circle,
shading = ball,
ball color = black,
inner sep = 1.5pt
}}
\SetVertexLabel
\SetUpEdge[color=black]

\Vertex[LabelOut,Lpos=180,x=0,y=0]{v_2}
\Vertex[LabelOut,Lpos=180,x=0,y=1]{v_3}
\Vertex[LabelOut,Lpos=0,x=1,y=0]{v_5}
\Vertex[LabelOut,Lpos=0,x=1,y=1]{v_4}
\Vertex[LabelOut,Lpos=270,x=0.5,y=-0.5]{v_1}
\Edges(v_3,v_2,v_1,v_5,v_4)
\Edge(v_4)(v_2)
\draw (0.5,-1.7) node{$P_t$};
\draw (0.5,-2.4) node{$t=(0,1.5,2.75,3.5,2)$};

\def\i{4}
\Vertex[LabelOut,Lpos=180,x=\i+0,y=1]{v_3}
\Vertex[LabelOut,Lpos=0,x=\i+1,y=-1]{v_1}
\Vertex[LabelOut,Lpos=0,x=\i+1,y=0]{v_2}
\Vertex[LabelOut,Lpos=0,x=\i+1,y=1]{v_5}
\Vertex[LabelOut,Lpos=0,x=\i+1,y=2]{v_4}
\Edges(v_4,v_5,v_2,v_1)
\Edge(v_2)(v_3)
\draw (\i+1.0,-1.7) node{$P_{t'}$};
\draw (\i+1.0,-2.4) node{$t'=(0,2,4.75,5.5,4)$};
\end{tikzpicture}
\caption{Two semiorders.}\label{fig:compatible semiorders}
\end{figure}

\subsection{Superstables algorithm}\label{ss-alg}
So far, we have described relations among three of our four structures: $G$-semiorders, 
$G$-semiorientations, and the regions of the $G$-semiorder arrangement.  We
would now like to forge a connection between these and the quasi-superstables on
$G$ (ultimately relating all of this, in Section~\ref{sink}, to ordinary
superstables).

Dhar's burning algorithm says that starting from a recurrent configuration,
then firing the sink, each non-sink vertex will fire exactly once in the
stabilization process.  We use these vertex firings to build a partial
orientation starting from the unoriented graph.  To sketch the idea, suppose that at
some point in the stabilization process there are nonsink vertices $v$ and~$w$
such that $e=\{v,w\}$ is an edge and $v$ is unstable.  Suppose $e$ has not
already been oriented or marked as a blank edge.  When $v$ fires, if $w$ is
stable, orient $e$ as $(v,w)$; otherwise mark $e$ blank.  Since all vertices
fire, each edge is visited. 

Starting with a quasi-superstable, $c$, on $G$, Proposition~\ref{duality}
provides a corresponding recurrent configuration on $K(G)$.  We use Dhar's
algorithm as above to create a $G$-semiorientation, $\mathcal{O}$, of $G$.
Using the construction of $\mathcal{O}$ as a guide, one may further refine the
procedure to simultaneously create a $G$-semiorder compatible with
$\mathcal{O}$.  This is the idea behind the intervals labeled by $J$ in the
algorithm, below. Theorem~\ref{main} then provides the important connection:
subtract one from each entry of the indegree sequence relative to $\mathcal{O}$
to recapture $c$.  

A careful description of our algorithm follows.  The input is a
quasi-superstable divisor on $G$ and a vertex ordering, and the output is a
$G$-semiorder $P$ and the $G$-semiorientation $\mathcal{O}_P$ compatible with
$P$.  Letting $\mathfrak{S}$ be the symmetric group on $\{0,\dots,n\}$, identify
$\sigma\in\mathfrak{S}$ with the vertex ordering
$(v_{\sigma(0)},\dots,v_{\sigma(n)})$.  Then our algorithm defines two mappings
$\phi:\mathcal{S}\times\mathfrak{S}\to\mathbb{I}$ and
$\eta:\mathcal{S}\times\mathfrak{S}\to\mathbb{O}$.  Theorem~\ref{main}, to
follow, validates the algorithm and shows that it produces a commutative diagram 
\begin{equation}\label{ss-so-po} 
\xymatrix{
\mathcal{S}\times\mathfrak{S}\ar[r]^-{\phi}\ar[rd]_{\eta}&\mathbb{I}\ar[d]^{\nu}\\
{}&\mathbb{O},  
} 
\end{equation}
where $\nu$ is the mapping defined in Section~\ref{semiorders and
semiorientations}.

Given a $c\in\mathcal{S}$ and a vertex ordering
$\sigma\in\mathfrak{S}$, the algorithm proceeds as
follows:
\needspace{5\baselineskip}
\bigskip

\noindent \fbox{\sc initialization} 
\medskip

Let $c_{\mathrm{max}}$ be the maximal stable configuration of $K(G)$, and let
\[
b = c_{\mathrm{max}}-c.
\]
Since $c\in\mathcal{S}$, we can write $c=\tilde{c}-1_V$ for some superstable
$\tilde{c}$ on
$K(G)$.  Therefore, $b=(c_{\mathrm{max}}-\tilde{c})+1_V$, i.e., $b$ is the
configuration on $K(G)$ obtained from $c_{\mathrm{max}}-\tilde{c}$ by firing the
sink $\tilde{q}$ of $K(G)$.  By Proposition~\ref{duality},
$c_{\mathrm{max}}-\tilde{c}$ is a recurrent configuration on $K(G)$; so by
Proposition~\ref{dhar}, every element of $V$ will fire exactly once while
stabilizing~$b$.

Let $u_1,\dots,u_k$ be the vertices that are unstable in $b$. Take this list of
vertices to be ordered according to $\sigma$, that is, if $u_i=v_{\sigma(\ell)}$
and $u_{j}=v_{\sigma(m)}$ with $\ell<m$, then $i<j$.  For each $u_i$,
associate an interval,
\[
J(u_i) = [i/(k+1), 1+i/(k+1)].
\]
Thus, all the $J(u_i)$ overlap. Form a queue,
\[
Q=(u_1,\dots,u_k).
\]
Initialize the partial orientation of $G$ as $\mathcal{O}=\emptyset.$
\smallskip

\noindent \fbox{\sc loop} 
\medskip

\noindent Repeat the following until the queue is empty:
\medskip

Say the queue is $Q=(w_0,\dots,w_{\ell})$, and 
   \[
   J(w_0)=[\alpha,\alpha+1].
   \]
   If $\ell>0$, define $\varepsilon$ using the interval for $w_1$:
   \[
    J(w_1)=[\alpha+\varepsilon,\alpha+\varepsilon+1].
   \]
   Otherwise, take $\varepsilon=1$.

   Fire $w_0$ and replace $b$ by the resulting configuration.  Remove $w_0$ from
   the queue and mark it so that it will never again appear in the queue.

   Let $z_1,\dots,z_t$ be the vertices that just became unstable with the firing
   of~$w_0$ and that have not yet been fired by the algorithm, listed in order
   according to $\sigma$.  Define
   \[
   J(z_i)=[\alpha+1+i\varepsilon/(t+1),\alpha+2+i\varepsilon/(t+1)].
   \]
  Add the $z_i$ to $Q$, in order:
  \[
  Q=(w_1,\dots,w_{\ell},z_1,\dots,z_t).
  \]

   For each edge $e=\{w_0,v\}$
   that is not oriented or marked as blank, 
 \begin{enumerate}
    \item if $v$ was already unstable before the firing of $w_0$,
      mark $e$ as blank, so~$e$ will not subsequently be added to $\mathcal{O}$;
    \item otherwise orient the edge out from $v$, i.e., add $(w_0,v)$ to $\mathcal{O}$.
 \end{enumerate}
\smallskip

\noindent \fbox{\sc output} 
\medskip

The intervals $\{J(v):v\in V\}$ determine a semiorder.  Identifying $v$
with~$J(v)$ gives a semiorder, $P$, on $V$.  Define $\phi(c)=P$ and
$\eta(c)=\mathcal{O}$.
\hfill$\Box$

\begin{example}\label{example:superstables algorithm} Let $G$ be the house graph
  of Figure~\ref{fig:house graph}, and fix the vertex ordering, $v_1,\dots,v_5$.
  Then $c=(-1,0,0,0,0)$ is a quasi-superstable divisor on $G$.
  Figure~\ref{fig:algorithm} illustrates the application of the superstables
  algorithm to $c$ and the given vertex ordering, producing a $G$-semiorientation,
  $\mathcal{O}$.  The dotted lines emanating from each vertex to the exterior of
  the house graph represent the edges to the sink of $K(G)$.

  We have $c_{\mathrm{max}}=(2,2,3,2,3)$ for $K(G)$.  The algorithm starts
  with the configuration $b=c_{\mathrm{max}}-c$ in the top left corner of
  Figure~\ref{fig:algorithm}.  The vertex~$v_1$ is unstable in $b$, and firing
  produces two oriented edges and two unstable vertices, $v_2$ and $v_5$.  Since
  $v_2$ comes first in the vertex ordering, it is fired next.  The subscripts
  keep track of vertex-firing precedence.  Note that when $v_5$ is fired, the
  edge $\{v_3,v_5\}$ is marked blank since, by that time, $v_3$ is unstable.
  Proceeding clockwise around the diagram, the algorithm terminates at the
  bottom left corner.  
  
  Let $\tilde{c}:=c+1_V$, a superstable configuration on $K(G)$.  When the
  algorithm terminates, the resulting configuration is
  $c_{\mathrm{max}}-\tilde{c}$, which is recurrent by Proposition~\ref{duality}.
  Adding $1_V$ reproduces the starting configuration~$c$.  However, adding~$1_V$
  is equivalent to firing the sink of $K(G)$, which explains (via Dhar's burning
  algorithm) why every vertex was guaranteed to fire.
\end{example}
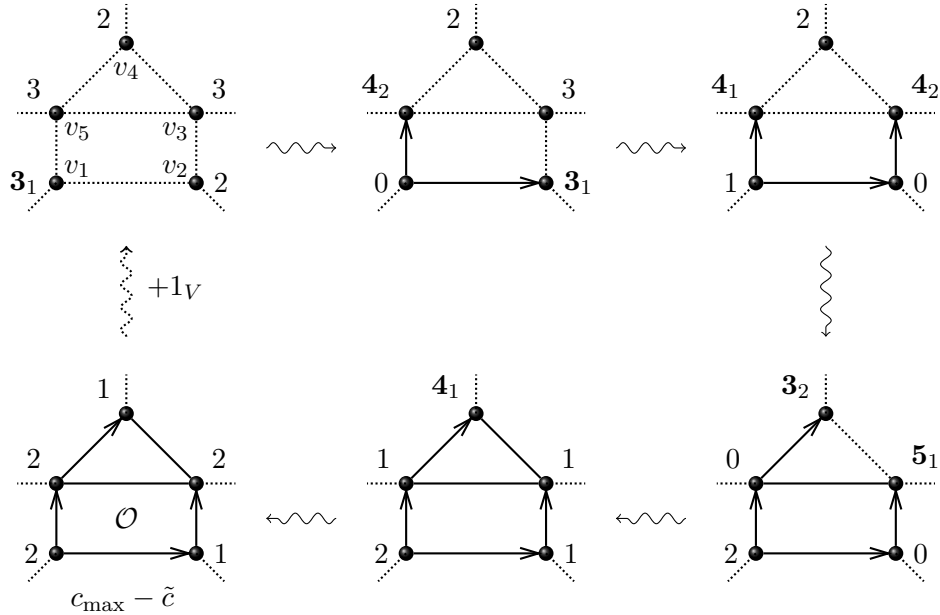
\begin{figure}[ht] 
\begin{tikzpicture}[scale=0.93]
\SetVertexMath
\GraphInit[vstyle=Art]
\SetUpVertex[MinSize=3pt]
\SetVertexLabel
\tikzset{VertexStyle/.style = {%
shape = circle,
shading = ball,
ball color = black,
inner sep = 2pt
}}
\SetUpEdge[color=black]

\begin{scope}[shift={(0,0)}]
\Vertex[LabelOut,Lpos=180,L=\mathbf{3}_1,x=0,y=0]{v_1}
\Vertex[LabelOut,Lpos=0,L=2,x=2,y=0]{v_2}
\Vertex[LabelOut,Lpos=45,L=3,x=2,y=1]{v_3}
\Vertex[LabelOut,Lpos=135,L=2,x=1,y=2]{v_4}
\Vertex[LabelOut,Lpos=135,L=3,x=0,y=1]{v_5}
\draw[densely dotted,thick] (0,0) -- (-0.4,-0.4);
\draw[densely dotted,thick] (2,0) -- (2.4,-0.4);
\draw[densely dotted,thick] (2,1) -- (2.56,1);
\draw[densely dotted,thick] (1,2) -- (1,2.56);
\draw[densely dotted,thick] (0,1) -- (-0.56,1);
\Edge[style={densely dotted,thick}](v_4)(v_5)
\Edge[style={densely dotted,thick}](v_5)(v_1)
\Edge[style={densely dotted,thick}](v_1)(v_2)
\Edge[style={densely dotted,thick}](v_2)(v_3)
\Edge[style={densely dotted,thick}](v_3)(v_4)
\Edge[style={densely dotted,thick}](v_5)(v_3)
\draw (0.3,0.2) node{$v_1$};
\draw (1.7,0.2) node{$v_2$};
\draw (1.7,0.74) node{$v_3$};
\draw (1,1.6) node{$v_4$};
\draw (0.3,0.74) node{$v_5$};
\end{scope}

\draw[decorate, decoration={snake, segment length=3mm, amplitude=2pt},->] (3,0.5)--(4,0.5);

\begin{scope}[shift={(5,0)}]
\Vertex[LabelOut,Lpos=180,L=0,x=0,y=0]{v_1}
\Vertex[LabelOut,Lpos=0,L=\mathbf{3}_1,x=2,y=0]{v_2}
\Vertex[LabelOut,Lpos=45,L=3,x=2,y=1]{v_3}
\Vertex[LabelOut,Lpos=135,L=2,x=1,y=2]{v_4}
\Vertex[LabelOut,Lpos=135,L=\mathbf{4}_2,x=0,y=1]{v_5}
\draw[densely dotted,thick] (0,0) -- (-0.4,-0.4);
\draw[densely dotted,thick] (2,0) -- (2.4,-0.4);
\draw[densely dotted,thick] (2,1) -- (2.56,1);
\draw[densely dotted,thick] (1,2) -- (1,2.56);
\draw[densely dotted,thick] (0,1) -- (-0.56,1);
\Edge[style={densely dotted,thick}](v_4)(v_5)
\Edge[style={->,>=mytip, thick}](v_1)(v_5)
\Edge[style={->,>=mytip, thick}](v_1)(v_2)
\Edge[style={densely dotted,thick}](v_2)(v_3)
\Edge[style={densely dotted,thick}](v_3)(v_4)
\Edge[style={densely dotted,thick}](v_5)(v_3)
\end{scope}

\draw[decorate, decoration={snake, segment length=3mm, amplitude=2pt},->] (8,0.5)--(9,0.5);

\begin{scope}[shift={(10,0)}]
\Vertex[LabelOut,Lpos=180,L=1,x=0,y=0]{v_1}
\Vertex[LabelOut,Lpos=0,L=0,x=2,y=0]{v_2}
\Vertex[LabelOut,Lpos=45,L=\mathbf{4}_2,x=2,y=1]{v_3}
\Vertex[LabelOut,Lpos=135,L=2,x=1,y=2]{v_4}
\Vertex[LabelOut,Lpos=135,L=\mathbf{4}_1,x=0,y=1]{v_5}
\draw[densely dotted,thick] (0,0) -- (-0.4,-0.4);
\draw[densely dotted,thick] (2,0) -- (2.4,-0.4);
\draw[densely dotted,thick] (2,1) -- (2.56,1);
\draw[densely dotted,thick] (1,2) -- (1,2.56);
\draw[densely dotted,thick] (0,1) -- (-0.56,1);
\Edge[style={densely dotted,thick}](v_4)(v_5)
\Edge[style={->,>=mytip, thick}](v_1)(v_5)
\Edge[style={->,>=mytip, thick}](v_1)(v_2)
\Edge[style={->,>=mytip, thick}](v_2)(v_3)
\Edge[style={densely dotted,thick}](v_3)(v_4)
\Edge[style={densely dotted,thick}](v_5)(v_3)
\end{scope}

\draw[decorate, decoration={snake, segment length=3mm, amplitude=2pt},->]
(11,-0.9)--(11,-2.2);

\begin{scope}[shift={(10,-5.3)}]
\Vertex[LabelOut,Lpos=180,L=2,x=0,y=0]{v_1}
\Vertex[LabelOut,Lpos=0,L=0,x=2,y=0]{v_2}
\Vertex[LabelOut,Lpos=45,L=\mathbf{5}_1,x=2,y=1]{v_3}
\Vertex[LabelOut,Lpos=135,L=\mathbf{3}_2,x=1,y=2]{v_4}
\Vertex[LabelOut,Lpos=135,L=0,x=0,y=1]{v_5}
\draw[densely dotted,thick] (0,0) -- (-0.4,-0.4);
\draw[densely dotted,thick] (2,0) -- (2.4,-0.4);
\draw[densely dotted,thick] (2,1) -- (2.56,1);
\draw[densely dotted,thick] (1,2) -- (1,2.56);
\draw[densely dotted,thick] (0,1) -- (-0.56,1);
\Edge[style={->,>=mytip,thick}](v_5)(v_4)
\Edge[style={->,>=mytip, thick}](v_1)(v_5)
\Edge[style={->,>=mytip, thick}](v_1)(v_2)
\Edge[style={->,>=mytip, thick}](v_2)(v_3)
\Edge[style={densely dotted,thick}](v_3)(v_4)
\Edge[style={thick}](v_5)(v_3)
\end{scope}

\draw[decorate, decoration={snake, segment length=3mm, amplitude=2pt},->]
(9,-4.8)--(8,-4.8);

\begin{scope}[shift={(5,-5.3)}]
\Vertex[LabelOut,Lpos=180,L=2,x=0,y=0]{v_1}
\Vertex[LabelOut,Lpos=0,L=1,x=2,y=0]{v_2}
\Vertex[LabelOut,Lpos=45,L=1,x=2,y=1]{v_3}
\Vertex[LabelOut,Lpos=135,L=\mathbf{4}_1,x=1,y=2]{v_4}
\Vertex[LabelOut,Lpos=135,L=1,x=0,y=1]{v_5}
\draw[densely dotted,thick] (0,0) -- (-0.4,-0.4);
\draw[densely dotted,thick] (2,0) -- (2.4,-0.4);
\draw[densely dotted,thick] (2,1) -- (2.56,1);
\draw[densely dotted,thick] (1,2) -- (1,2.56);
\draw[densely dotted,thick] (0,1) -- (-0.56,1);
\Edge[style={->,>=mytip,thick}](v_5)(v_4)
\Edge[style={->,>=mytip, thick}](v_1)(v_5)
\Edge[style={->,>=mytip, thick}](v_1)(v_2)
\Edge[style={->,>=mytip, thick}](v_2)(v_3)
\Edge[style={thick}](v_3)(v_4)
\Edge[style={thick}](v_5)(v_3)
\end{scope}

\draw[decorate, decoration={snake, segment length=3mm, amplitude=2pt},->]
(4,-4.8)--(3,-4.8);

\begin{scope}[shift={(0,-5.3)}]
\Vertex[LabelOut,Lpos=180,L=2,x=0,y=0]{v_1}
\Vertex[LabelOut,Lpos=0,L=1,x=2,y=0]{v_2}
\Vertex[LabelOut,Lpos=45,L=2,x=2,y=1]{v_3}
\Vertex[LabelOut,Lpos=135,L=1,x=1,y=2]{v_4}
\Vertex[LabelOut,Lpos=135,L=2,x=0,y=1]{v_5}
\draw[densely dotted,thick] (1,2) -- (1,2.56);
\draw[densely dotted,thick] (0,1) -- (-0.56,1);
\draw[densely dotted,thick] (0,0) -- (-0.4,-0.4);
\draw[densely dotted,thick] (2,0) -- (2.4,-0.4);
\draw[densely dotted,thick] (2,1) -- (2.56,1);
\Edge[style={->,>=mytip, thick}](v_5)(v_4)
\Edge[style={->,>=mytip, thick}](v_1)(v_5)
\Edge[style={->,>=mytip, thick}](v_1)(v_2)
\Edge[style={->,>=mytip, thick}](v_2)(v_3)
\Edge[style={thick}](v_3)(v_4)
\Edge[style={thick}](v_5)(v_3)
\draw (0.94,-0.65) node {$c_{\mathrm{max}}-\tilde{c}$};
\draw (1,0.5) node{$\mathcal{O}$};
\end{scope}

\draw[decorate, decoration={snake, segment length=3mm, amplitude=2pt},->,densely
dotted,thick]
(1,-2.2)--(1,-0.9);

\draw (1.7,-1.5) node {$+1_V$}; 
\end{tikzpicture}
\caption{Superstables algorithm (cf.~Example~\ref{example:superstables algorithm}). Start in the upper-left
corner.  Bold numbers connote unstable vertices.  Subscripts indicate
firing precedence.}\label{fig:algorithm}
\end{figure}

\begin{thm}\label{main}  
  Given $P\in\mathbb{I}$, for each vertex $v$ let $n_P(v)$ denote
    the cardinality of the set
    \[
    \{u\in V: \text{$u<v$ and $\{u,v\}\in E$}\}.
    \] 
    There are mappings
    \begin{align*}
      \theta:\mathbb{I}&\to\mathcal{S}\\
      P&\mapsto\sum_{v\in
      V}(n_P(v)-1)\,v,
    \end{align*}
   and
    \begin{align*}
      \psi:\mathbb{O}&\to\mathcal{S}\\
      \mathcal{O}&\mapsto\sum_{v\in
      V}(\mathrm{indeg_{\mathcal{O}}}(v)-1)\,v.
    \end{align*}
    Let $\pi:\mathcal{S}\times\mathfrak{S}\to\mathcal{S}$ be the first
    projection mapping, and let $\phi$ and $\eta$ be the mappings defined by the
    superstables algorithm.

    The following diagram commutes:
    \begin{equation}\label{diag1}
    \xymatrix{
    &\mathcal{S}\times\mathfrak{S}\ar@/_/[ddl]_\phi
    \ar@/^/[ddr]^\eta
    \ar@{>>}[d]^\pi&\\
    & \mathcal{S}&\\
    \mathbb{I}\ar@{>>}[ur]_(0.58)\theta\ar@{>>}[rr]_\nu &&\mathbb{O}.\ar@{>>}[ul]^(0.58)\psi
    }
  \end{equation}
    All mappings in the diagram are surjective except possibly for $\phi$ and $\eta$.
\end{thm}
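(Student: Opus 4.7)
The plan is to first verify the identity $\theta = \psi\circ\nu$ directly from definitions, then to show $\psi$ (and hence $\theta$) takes values in $\mathcal{S}$, then to analyze the superstables algorithm to deduce $\eta = \nu\circ\phi$ and $\psi\circ\eta = \pi$, and finally to read off surjectivity from these factorizations. For the identity, Theorem~\ref{compatibility} gives $\nu(P) = \mathcal{O}_P = \{(u,v) : \{u,v\}\in E,\ u<v \text{ in } P\}$, so $\mathrm{indeg}_{\mathcal{O}_P}(v) = n_P(v)$ and $\theta = \psi\circ\nu$ as divisor-valued maps. It therefore suffices to prove $\psi(\mathcal{O})\in\mathcal{S}$ for every $\mathcal{O}\in\mathbb{O}$. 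Setting $c = \psi(\mathcal{O})$ and $\tilde c = c + 1_V = \sum_v \mathrm{indeg}_{\mathcal{O}}(v)\,v$, I would show $\tilde c$ is superstable on $K(G)$. Suppose not: some nonempty $X\subseteq V$ can be legally fired from $\tilde c$. Accounting for each $v\in X$'s edge to $\tilde q$ in $K(G)$, the firing inequality becomes
\[
\mathrm{indeg}_{\mathcal{O}}(v)\;\geq\;1+\#\{\text{edges from }v\text{ to }V\setminus X\}.
\]
Splitting $\mathrm{indeg}_{\mathcal{O}}(v)$ into contributions from inside $X$ and from $V\setminus X$, and noting the latter is at most $\#\{\text{edges from }v\text{ to }V\setminus X\}$, yields $\mathrm{indeg}_{\mathcal{O}}^X(v)\geq 1$ for every $v\in X$. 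A finite directed graph in which every vertex has positive in-degree contains a directed cycle, and such a cycle inside $X$ would be a potential cycle of $\mathcal{O}$ with no blank edges, contradicting $\mathcal{O}\in\mathbb{O}$.

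Next, I would analyze the algorithm. Writing $c = \tilde c - 1_V$ with $\tilde c$ superstable on $K(G)$, Proposition~\ref{duality} makes $c_{\mathrm{max}} - \tilde c$ recurrent, and Proposition~\ref{dhar} then guarantees that the stabilization of $b = c_{\mathrm{max}} - c$ fires every vertex of $V$ exactly once, so the loop terminates. The crucial claim is the compatibility $\eta(c,\sigma) = \nu(\phi(c,\sigma))$. For this I would maintain the invariant that at every step of the loop, the $\alpha$-coordinates of the intervals assigned to queue members are strictly increasing in queue order with total spread less than $1$. Both parts hold initially, and are preserved by the step rules: new intervals $J(z_i) = [\alpha_{w_0}+1+i\varepsilon/(t+1),\ldots]$ have $\alpha$-coordinates exceeding every earlier queue $\alpha$, while the updated spread computes to $1 - \varepsilon/(t+1) < 1$. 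With this invariant in hand, for each edge $\{u,v\}$: if $u$ fires strictly before $v$ becomes unstable, then $J(u)$ strictly precedes $J(v)$ (since $v$'s interval is created with left endpoint exceeding $\alpha_u + 1$) and the algorithm orients $(u,v)$; if both vertices sit in the queue at the moment the first of them fires, the spread invariant makes $J(u)$ and $J(v)$ overlap, and the algorithm marks the edge blank. This is exactly the compatibility condition, so by Theorem~\ref{compatibility} we get $\eta(c,\sigma) = \nu(\phi(c,\sigma))$, and in particular $\phi$ and $\eta$ take values in $\mathbb{I}$ and $\mathbb{O}$. A chip-accounting argument then shows $\psi\circ\eta = \pi$: since $v$ starts with $b(v) = \mathrm{deg}\,v - c(v)$ chips and becomes unstable only upon receiving $c(v)+1$ further chips from neighbor firings, the algorithm orients exactly those $c(v)+1$ edges into $v$ (any later chip-contributing firing happens while $v$ is unstable or after $v$ has fired, so the corresponding edge is either marked blank then or was already processed at $v$'s own firing). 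Hence $\mathrm{indeg}_{\eta(c,\sigma)}(v) = c(v)+1$, and combined with $\eta = \nu\circ\phi$ and $\theta = \psi\circ\nu$ this also gives $\theta\circ\phi = \pi$.

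Finally, for surjectivity: $\pi$ is a projection and hence surjective, $\nu$ is surjective by Theorem~\ref{region-semiorder} (any $t\in\rho(\mathcal{O})$ gives $P_t$ with $\nu(P_t) = \mathcal{O}$), and the factorizations $\theta\circ\phi = \pi = \psi\circ\eta$ with $\pi$ surjective force both $\theta$ and $\psi$ to be surjective. I expect the main obstacle to be the interval-compatibility step: verifying the spread invariant through the recursive definition of $\varepsilon$ and the staggered placement of the newly created intervals is the one place where the algorithm's bookkeeping really has to be tracked carefully. The chip-accounting that reads off $\mathrm{indeg}_{\eta(c,\sigma)}(v) = c(v)+1$ is then a short consequence of Dhar's algorithm.
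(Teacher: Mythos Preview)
Your proof is correct and establishes every claim of the theorem. The overall architecture matches the paper's, but there are two genuine differences in route worth noting.

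First, to show that $\psi$ (and hence $\theta$) lands in $\mathcal{S}$, the paper goes the other way around: it first proves directly that $\theta(P)\in\mathcal{S}$ by exhibiting, via Dhar's burning algorithm, a legal firing sequence for $c_{\max}-\theta(P)$ on $K(G)$ in which every vertex fires exactly once (ordering the vertices by the left endpoints of intervals realizing $P$). It then concludes $\psi(\mathbb{O})\subseteq\mathcal{S}$ from $\psi\circ\nu=\theta$ together with the surjectivity of~$\nu$. Your route is more direct and self-contained: assuming some nonempty $X$ fires legally from $\psi(\mathcal{O})+1_V$ on $K(G)$, you reduce to $\mathrm{indeg}^X_{\mathcal{O}}(v)\geq 1$ for every $v\in X$ and extract a fully oriented directed cycle, contradicting the semiorientation condition. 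This avoids Dhar at this step and gives an intrinsic combinatorial reason why semiorientations produce quasi-superstables; the paper's route, on the other hand, reuses machinery already in play and makes the link to recurrence explicit.

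Second, for the lower triangle the paper proves $\theta\circ\phi=\pi$ by the chip count and then reads off $\psi\circ\eta=\pi$ from the other identities, whereas you go straight to $\psi\circ\eta=\pi$; these are equivalent once $\eta=\nu\circ\phi$ and $\theta=\psi\circ\nu$ are in hand. Your treatment of $\eta=\nu\circ\phi$ via the queue-spread invariant (strictly increasing $\alpha$-coordinates with total spread less than~$1$) is also more detailed than the paper's, which simply asserts the equivalence ``$v$ stable when $u$ fires $\Leftrightarrow u<v$ in $P$'' without unpacking the interval bookkeeping.
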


\begin{proof}
\noindent{\bf  The image of $\phi$ is in $\mathbb{I}$.}  In stabilizing
$b$, each vertex eventually becomes unstable and is assigned an interval.  Thus,
the semiorder produced by the algorithm is a semiorder on all the vertices of
$G$.
\medskip

\noindent{\bf The image of $\theta$ is in $\mathcal{S}$.} Let $P\in\mathbb{I}$.
Then $P$ is isomorphic to the semiorder on a collection of intervals,
$\{J(v):v\in V\}$.  Letting $c=\theta(P)$, we must show that $c+1_V$ is a
superstable configuration of $K(G)$.   Let $c_{\mathrm{max}}$ be the maximal
stable configuration on $K(G)$, and define $b=c_{\mathrm{max}}-(c+1_V)$.  Note
that~$b\geq0$.   Starting with $b$ and firing the sink of $K(G)$ gives
$\tilde{b}=c_{\mathrm{max}}-c$.  By Propositions~\ref{duality}
and~\ref{dhar}, we must show that there is an ordering of $V$ forming a legal
firing sequence for $\tilde{b}$.

Choose any~$\sigma\in\mathfrak{S}$ so that $\sigma(i)<\sigma(j)$ if $\min
J_i<\min J_j$.  In particular, this means that if $v_i<v_j$ in $P$, then
$\sigma(i)<\sigma(j)$.  The $v$-th component of $\tilde{b}$ is
\[
\tilde{b}_v = \deg_{K(G)}(v) - n_P(v).
\]
We can legally fire the vertices in the order given by $\sigma$ since when it
becomes~$v$'s turn to fire, it will have received $n_P(v)$ grains of sand from the
firings of those vertices $u$ neighboring $v$ such that $u<v$ and will thus be
unstable. 
\medskip

\noindent{\bf We have $\theta\circ\phi=\pi$.}  Let $P=\phi(c,\sigma)$ and
$b=c_{\mathrm{max}}-c$ where $c_{\mathrm{max}}$ is the maximal stable
configuration on $K(G)$. Let $v\in V$.  If $v$ is unstable in~$b$, then $c_v=-1$
and there are no vertices smaller than $v$ in $P$. Hence, $n_P(v)-1=c_v=-1$, as
required. Otherwise, in the course of the algorithm, say $u$ is the vertex whose
firing causes $v$ to become unstable.  Then the vertices that are less than $v$
in $P$ are exactly the vertices $w$ such that $w\leq u$.  Among these $w$, only
those that are attached to $v$ by an edge contribute to making $v$ unstable.
Thus, exactly $n_P(v)$ grains of sand are added to $v$ in $b$ to make $v$
unstable. Since $b_v=\deg_{K(G)}v-1-c_v$, we have that $n_P(v)=1+c_v$, as
required.
\medskip

\noindent{\bf The mapping $\nu$ is surjective.}  The surjectivity of
$\nu$ follows from Theorem~\ref{region-semiorder}.
\medskip

\noindent{\bf We have $\psi\circ\nu=\theta$ and the image of $\psi$ is in
$\mathcal{S}$.}  Let $P\in\mathbb{I}$ and $\mathcal{O}=\nu(P)$.
Since~$\mathcal{O}$ is compatible with~$P$, we have that
$n_P(v)=\mathrm{indeg}_{\mathcal{O}}(v)$ for each $v\in V$.  Hence,
$\psi(\nu(P))=\theta(P)$ as mappings of configurations.  Since $\nu$ is
surjective, the image of $\psi$ is contained in the image of $\theta$, hence in~$\mathcal{S}$.
\medskip

\noindent{\bf We have $\nu\circ\phi=\eta$}.  Given
$(c,\sigma)\in\mathcal{S}\times\mathfrak{S}$, let $\mathcal{O}=\eta(c,\sigma)$
and~$P=\phi(c,\sigma)$.  We must show that $\mathcal{O}$ is compatible with $P$.
Let $b$ be as in the algorithm.  Run the algorithm up until it is a vertex $u$'s
turn to fire.  Suppose $\{u,v\}\in E$. Then~$v$ being stable at this point 
is equivalent to $u<v$ in $P$ and equivalent to~$(u,v)\in\mathcal{O}$.  
\end{proof}
\begin{cor}\label{cor-to-main}
  Let $\sigma\in\mathfrak{S}$.  Then the mappings
  \[
  \phi_{\sigma}\colon\mathcal{S}\to\mathbb{I},\qquad
  \eta_{\sigma}\colon\mathcal{S}\to\mathbb{O},\qquad
  \]
  defined by $\phi_{\sigma}(c)=\phi(c,\sigma)$ and
  $\eta_{\sigma}(c)=\eta(c,\sigma)$, are injective with left inverses~$\theta$
  and $\psi$, respectively.

  Let $\mathcal{S}_{\mathrm{max}}$ denote the maximal quasi-superstables under
  the relation ``$<$'' defined in Section~\ref{quasi-superstables}, and let
  $\mathbb{O}_{\mathrm{max}}$ denote the acyclic orientations
  of~$G$ (elements of $\mathbb{O}$ in which each edge is oriented).   For each
  $c\in\mathcal{S}$, define the {\em degree} of $c$ to be
  $\deg(c):=\sum_{v\in V}c_v$. Then,
  \begin{enumerate}
    \item\label{cor-to-main1} the restriction of $\eta_{\sigma}$ to $\mathcal{S}_{\mathrm{max}}$
      gives a bijection 
      \[ 
      \eta_{\sigma}\colon\mathcal{S}_{\mathrm{max}}\to\mathbb{O}_{\mathrm{max}}, 
      \]
    \item\label{cor-to-main2} for $c\in\mathcal{S}$, we have $\deg(c)\leq g-1$, where
      $g:=|E|-|V|+1$ is the {\em genus} of $G$,  with equality if and only if
      $c\in\mathcal{S}_{\mathrm{max}}$.
  \end{enumerate}
\end{cor}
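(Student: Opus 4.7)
The plan is to establish the corollary in three stages: injectivity together with the upper bound in~(2), the reverse direction of~(2) via extension to an acyclic orientation, and finally the bijection in~(1). The injectivity assertions with specified left inverses are immediate from the commutative diagram~(\ref{diag1}) of Theorem~\ref{main}: fixing $\sigma$, the relation $\theta\circ\phi=\pi$ restricts to $\theta\circ\phi_\sigma=\mathrm{id}_\mathcal{S}$, and $\psi\circ\eta_\sigma=\psi\circ\nu\circ\phi_\sigma=\theta\circ\phi_\sigma=\mathrm{id}_\mathcal{S}$. For the bound in~(2), the key identity is
\[
\deg(\psi(\mathcal{O}))=\sum_{v\in V}(\mathrm{indeg}_\mathcal{O}(v)-1)=|\mathcal{O}|-|V|,
\]
where $|\mathcal{O}|$ denotes the number of oriented edges; since $|\mathcal{O}|\le|E|$ and $\psi$ is surjective (Theorem~\ref{main}), $\deg(c)\le g-1$ for all $c\in\mathcal{S}$. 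Moreover, if $\deg(c)=g-1$ then any $c'>c$ has $\deg(c')>g-1$, ruling out $c'\in\mathcal{S}$ and giving $c\in\mathcal{S}_{\mathrm{max}}$.

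For the reverse implication in~(2), I would argue by contrapositive. Suppose $\deg(c)<g-1$, and set $\mathcal{O}=\eta_\sigma(c)\in\mathbb{O}$, which then has at least one blank edge. The potential-cycle axiom rules out directed cycles in $\mathcal{O}$ (any directed cycle is a zero-blank potential cycle), so the oriented part of $\mathcal{O}$ is a DAG. Taking any linear extension of this DAG and orienting each remaining blank edge from smaller to larger in that extension produces a fully oriented acyclic $\mathcal{O}^*\supseteq\mathcal{O}$. A full acyclic orientation automatically lies in $\mathbb{O}_{\mathrm{max}}$: in a fully oriented partial orientation, a potential cycle is precisely a directed cycle, and acyclicity forbids these. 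Because $\mathcal{O}\subseteq\mathcal{O}^*$, indegrees only increase, so $\psi(\mathcal{O}^*)\ge c$ componentwise, and $\deg(\psi(\mathcal{O}^*))=g-1>\deg(c)$ shows the inequality is strict. Thus $\psi(\mathcal{O}^*)\in\mathcal{S}$ strictly dominates $c$, contradicting $c\in\mathcal{S}_{\mathrm{max}}$.

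For part~(1), the equality $\deg(c)=g-1$ just established forces $\eta_\sigma(c)$ to have $|E|$ oriented edges, whence $\eta_\sigma|_{\mathcal{S}_{\mathrm{max}}}$ lands in $\mathbb{O}_{\mathrm{max}}$ and is injective (inherited from the global injectivity of $\eta_\sigma$). For surjectivity I would appeal to the classical bijection cited in the introduction (\cite{benson},\cite{biggs},\cite{goles},\cite{greene}) between maximal superstable configurations of $K(G)$ and acyclic orientations of $K(G)$ whose unique source is $\tilde{q}$; deletion of $\tilde{q}$ bijects these with arbitrary acyclic orientations of $G$. Combined with the order-preserving bijection $c\leftrightarrow c+1_V$ between $\mathcal{S}$ and the superstable configurations of $K(G)$, this yields $|\mathcal{S}_{\mathrm{max}}|=|\mathbb{O}_{\mathrm{max}}|$, and an injection between finite sets of equal cardinality is a bijection. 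The main obstacle is the acyclic-extension step above: the crucial observation is that any $\mathcal{O}\in\mathbb{O}$ has acyclic oriented part, so it extends to some $\mathcal{O}^*\in\mathbb{O}_{\mathrm{max}}$ containing it, and the componentwise inequality $\psi(\mathcal{O}^*)\ge c$ then carries the day.
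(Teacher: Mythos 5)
Your proof is correct, but it takes a genuinely different route from the paper's at two points. First, to show that $\deg(c)<g-1$ forces non-maximality (equivalently, that $\eta_\sigma(\mathcal{S}_{\mathrm{max}})\subseteq\mathbb{O}_{\mathrm{max}}$), you observe that the oriented part of any $\mathcal{O}\in\mathbb{O}$ is acyclic (a directed cycle would be a potential cycle with no blanks) and extend it to a full acyclic orientation $\mathcal{O}^*\supseteq\mathcal{O}$ via a linear extension; the paper instead realizes $\mathcal{O}$ as $\nu(P_t)$ for a point $t$ in the corresponding region (Theorem~\ref{region-semiorder}) and perturbs $t$ to a point $t'$ whose coordinates are pairwise more than $1$ apart. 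These produce essentially the same extension, and your version is the more elementary and self-contained of the two. Second, for surjectivity of $\eta_\sigma|_{\mathcal{S}_{\mathrm{max}}}$ you argue by counting: you invoke the classical bijection between maximal superstables of $K(G)$ and acyclic orientations of $K(G)$ with unique source $\tilde{q}$, strip off $\tilde{q}$ to get all acyclic orientations of $G$, and conclude that an injection between finite sets of equal cardinality is a bijection. The paper instead proves directly that $\psi$ restricted to $\mathbb{O}_{\mathrm{max}}$ is injective---the indegrees, hence the outdegrees, are determined by $c$, and an acyclic orientation is recoverable from its outdegree sequence by iteratively peeling off sinks---and then deduces surjectivity from $\psi\circ\eta_\sigma=\mathrm{id}$. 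The paper's route is self-contained and in effect reproves the classical correspondence you cite, whereas your argument is shorter but outsources to the literature what is essentially the content of part~(\ref{cor-to-main1}); it does still add value by identifying the bijection concretely as $\eta_\sigma$. Both arguments are valid.
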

\begin{proof}
  Both $\phi_{\sigma}$ and $\eta_{\sigma}$ are injective on $\mathcal{S}$ with
  the claimed left inverses since the restriction of $\pi$ in
  diagram~\ref{diag1} of Theorem~\ref{main} to $\mathcal{S}\times\{\sigma\}$ is
  bijective.  

  To show
  $\eta_{\sigma}(\mathcal{S}_{\mathrm{max}})\subseteq\mathbb{O}_{\mathrm{max}}$,
  let $c\in\mathcal{S}$ and suppose that
  $\mathcal{O}:=\eta_{\sigma}(c)\notin\mathbb{O}_{\mathrm{max}}$.  According to
  Theorem~\ref{region-semiorder} there is a point $t=(t_0,\dots,t_n)\in\R^{n+1}$
  and a corresponding $G$-semiorder, $P_t$, such that $\nu(P_t)=\mathcal{O}$.
  For ease of notation, we may assume that $t_0\leq\dots\leq t_n$.  Choose
  $t'\in\R^{n+1}$ such that $t'_i\geq t_i$ for all $i$ and $t'_{i+1}>t'_i+1$ for
  $0\leq i\leq n-1$.  Let $\mathcal{O}':=\nu(P_{t'})$ and
  $c':=\psi(\mathcal{O}')$.  Then the set of oriented edges $\mathcal{O}$ is a
  proper subset of $\mathcal{O}'$.  Hence, $c<c'$, showing
  $c\notin\mathcal{S}_{\mathrm{max}}$, as desired.

  Now let $c\in\mathcal{S}$, and choose any $\mathcal{O}\in\mathbb{O}$ such that
  $c=\psi(\mathcal{O})$.  From the definition of $\psi$, we have
  $\deg(c)=|\mathcal{O}|-|V|\leq |E|-|V|=g-1$, with equality exactly when
  ${\mathcal{O}\in\mathbb{O}_{\mathrm{max}}}$.  If $c<c'$ for some
  quasi-superstable $c'$, then since $\deg(c)<\deg(c')$, it follows that
  $\mathcal{O}\notin\mathbb{O}_{\mathrm{max}}$.  This shows, for instance, that
  $\psi(\mathbb{O}_{\mathrm{max}})\subseteq\mathcal{S}_{\mathrm{max}}$. 

  It remains to be shown that $\psi$ restricted to $\mathbb{O}_{\mathrm{max}}$
  is injective.  Let ${c\in\mathcal{S}_{\mathrm{max}}}$ and choose any
  $\mathcal{O}$ such that $c=\psi(\mathcal{O})$.  We have seen that
  $\mathcal{O}\in\mathbb{O}_{\mathrm{max}}$.  From the definition of $\psi$, we
  see that $\mathrm{indeg}_{\mathcal{O}}(v)$ is determined by~$c$ for each $v\in
  V$.  Then
  $\mathrm{outdeg}_{\mathcal{O}}(v)=\deg_G(v)-\mathrm{indeg}_{\mathcal{O}}$
  since $\mathcal{O}$ orients every edge of $G$, and as noted in~\cite{benson},
  these outdegrees determine $\mathcal{O}$.  To see this, consider the graph $G$
  oriented by $\mathcal{O}$. Since $\mathcal{O}$ is acyclic, this oriented graph
  must have sinks, i.e., vertices with outdegree $0$.  Thus, $c$ determines the
  orientation of all edges incident on these sinks.  Now remove these sinks and
  their incoming edges.  The orientation of the resulting graph is still
  acyclic, and its sinks may also be determined from $c$.  Iterate to see that
  $\mathcal{O}$ is determined by~$c$, and hence, $\psi$ is injective when
  restricted to $\mathbb{O}_{\mathrm{max}}$.
\end{proof}

\begin{thm}
The mapping $\nu$ is bijective if and only if $G$ is a complete graph.
\end{thm}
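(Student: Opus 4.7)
The plan is to leverage the surjectivity of $\nu$ proved in Theorem~\ref{main}, which reduces the biconditional to deciding when $\nu$ is injective. The easy direction is ``$G$ complete implies $\nu$ bijective'': if every pair of distinct vertices is an edge of $G$, then for any $G$-semiorder $P$, the compatibility condition specializes to $(u,v)\in\mathcal{O}_P$ iff $u<v$ in $P$ for \emph{all} pairs. Hence $\mathcal{O}_P$ encodes the full comparability relation of $P$, from which $P$ is recovered. So $\nu$ is injective, and combined with surjectivity it is bijective.

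For the reverse implication I would argue the contrapositive: assuming $G$ is not complete, exhibit distinct $G$-semiorders $P,P'$ with $\nu(P)=\nu(P')$. Since $G$ is connected and not complete, there exist vertices $u,v\in V$ with $\{u,v\}\notin E$, and $|V|\geq 3$. The strategy is to exploit the non-edge: build two interval representations differing only in the position of $v$, and only by enough to toggle the comparability of $u$ and $v$ while leaving every other relation intact. Concretely, pick a large constant $N$ and realize $P$ with $p(u)=0$, $p(v)=3/2$, and $p(w)=-N$ for every other vertex $w$; realize $P'$ identically except $p'(v)=1/2$. Then in $P$ we have $u<v$, while in $P'$ we have $u\parallel v$, so $P\neq P'$ as labeled posets. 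For $N>3/2$, every $w\neq u,v$ still satisfies $w<u$ and $w<v$ in both semiorders (since $-N+1<1/2<3/2$), and distinct $w,w'\neq u,v$ remain incomparable in both. Because the single pair on which $P$ and $P'$ disagree is the non-edge $\{u,v\}$, it contributes nothing to $\mathcal{O}_P$ or $\mathcal{O}_{P'}$, so $\nu(P)=\nu(P')$.

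The main obstacle is arguing that this cheap construction actually alters exactly one comparability and that it fits our hypotheses; in particular one has to be comfortable placing many vertices at the same real number (this is legitimate, yielding an antichain of identical intervals) and verify that each edge incident to $u$ or $v$ retains its orientation under the shift of $v$. Both checks are routine once $N$ is chosen large enough, which is why pushing the ``background'' vertices off to $-N$ makes the verification transparent. The small boundary cases $|V|\in\{1,2\}$ pose no problem: in both, any connected $G$ is automatically complete (and $\nu$ is trivially bijective), so the equivalence holds vacuously there.
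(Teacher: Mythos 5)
Your proposal is correct and follows essentially the same strategy as the paper: surjectivity from Theorem~\ref{main}, injectivity for complete graphs by recovering the comparability relation from $\mathcal{O}_P$, and for the converse a pair of $G$-semiorders differing only in the comparability of the two endpoints of a non-edge. The paper's witness pair is just simpler (the antichain versus the poset with the single relation $v_i<v_j$, both mapping to the empty semiorientation), whereas you push the remaining vertices below $u$ and $v$; both constructions work.
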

\begin{proof}
The surjectivity of $\nu$ is part of Theorem~\ref{main}.
Suppose that $G$ is a complete graph and that $\nu(P_1) = \nu(P_2)$ for some $P_1, P_2 \in
\mathbb{I}$.  Then $v_i < v_j$ in $P_1$ implies $(v_i,v_j) \in \nu(P_1)$, which in turn
implies $v_i < v_j$ in~$P_2$.  Similarly, $v_i < v_j$ in $P_2$ implies $v_i < v_j$ in~$P_1$.
Thus, $P_1 = P_2$, so $\nu$ is injective.

Now suppose $G$ is not complete, i.e., there exist $v_i,v_j \in V$ such that $\{v_i,
v_j\} \notin E$.  Let $P_1$ be the $G$-semiorder where no vertices are
comparable, and let $P_2$ be the $G$-semiorder where $v_i < v_j$ and all other pairs
of vertices are incomparable.  We have $\nu(P_1) = \emptyset =
\nu(P_2)$.  So $\nu$ is not injective.
\end{proof}

\begin{example}\label{example:not surjective}
The mappings $\phi$ and $\eta$ are not necessarily surjective.  Consider the
case where $G$ is a cycle graph on five vertices.  Figure~\ref{fig:surjectivity}
displays the Hasse diagram of a $G$-semiorder, $P$, and its compatible
$G$-semiorientation $\nu(P)=\mathcal{O}_P$.  For any ordering $\sigma$ of the
vertices, $\eta(\psi(\mathcal{O}_P),\sigma)=\mathcal{O}'$ where $\mathcal{O}'$
is as pictured in Figure~\ref{fig:surjectivity}.
\end{example}

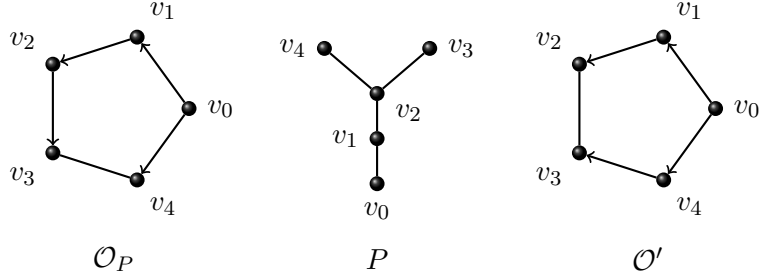
\begin{figure} 
\begin{tikzpicture}
\SetVertexMath
\GraphInit[vstyle=Art]
\tikzset{VertexStyle/.style = {%
shape = circle,
shading = ball,
ball color = black,
inner sep = 2pt
}}
\SetUpVertex[MinSize=3pt]
\SetVertexLabel
\SetUpEdge[color=black]
\Vertex[LabelOut,Lpos=0,
Ldist=.01cm,x=1,y=0]{v_0}
\Vertex[LabelOut,Lpos=72,
Ldist=.01cm,x=0.31,y=0.95]{v_1}
\Vertex[LabelOut,Lpos=144,
Ldist=.01cm,x=-0.81,y=0.59]{v_2}
\Vertex[LabelOut,Lpos=216,
Ldist=.01cm,x=-0.81,y=-0.59]{v_3}
\Vertex[LabelOut,Lpos=288,
Ldist=.01cm,x=0.31,y=-0.95]{v_4}
\Edge[style=->](v_0)(v_1)
\Edge[style=->](v_0)(v_4)
\Edge[style=->](v_1)(v_2)
\Edge[style=->](v_2)(v_3)
\Edge(v_3)(v_4)
\draw (0,-2) node{$\mathcal{O}_P$};

\Vertex[LabelOut,Lpos=270,
Ldist=.01cm,x=3.5,y=-1]{v_0}
\Vertex[LabelOut,Lpos=180,
Ldist=.01cm,x=3.5,y=-0.4]{v_1}
\Vertex[LabelOut,Lpos=356,
Ldist=.01cm,x=3.5,y=0.2]{v_2}
\Vertex[LabelOut,Lpos=0,
Ldist=.01cm,x=4.2,y=0.8]{v_3}
\Vertex[LabelOut,Lpos=180,
Ldist=.01cm,x=2.8,y=0.8]{v_4}
\Edges(v_0,v_1,v_2,v_3)
\Edges(v_2,v_4)
\draw (3.5,-2) node{$P$};

\Vertex[LabelOut,Lpos=0,
Ldist=.01cm,x=8,y=0]{v_0}
\Vertex[LabelOut,Lpos=72,
Ldist=.01cm,x=7.31,y=0.95]{v_1}
\Vertex[LabelOut,Lpos=144,
Ldist=.01cm,x=6.19,y=0.59]{v_2}
\Vertex[LabelOut,Lpos=216,
Ldist=.01cm,x=6.19,y=-0.59]{v_3}
\Vertex[LabelOut,Lpos=288,
Ldist=.01cm,x=7.31,y=-0.95]{v_4}
\Edge[style=->](v_0)(v_1)
\Edge[style=->](v_0)(v_4)
\Edge[style=->](v_1)(v_2)
\Edge[style=->](v_4)(v_3)
\Edge(v_2)(v_3)
\draw (7.1,-2) node{$\mathcal{O}'$};
\end{tikzpicture}
\caption{Figure for Example~\ref{example:not surjective}.}\label{fig:surjectivity}
\end{figure}

\subsection{Labeling regions with superstables}\label{labeling}
We can inductively label the regions of the $G$-semiorder arrangement,
$\mathscr{I}$, with quasi-superstables.  We call this labeling the (generalized)
{\em Pak-Stanley} labeling after \cite{pak-stanley}.  Start with the central region of
$\mathscr{I}$, the region defined by $|x_i-x_j|<1$ for all $i,j$ such that
$\{v_i,v_j\}$ is an edge of $G$.  Label this region with the divisor
$-1_V$.  Put the central region in a queue, $Q$.  Then, for as long as $Q$ is
not empty:
\begin{enumerate}
  \item Remove the first region $r$ from $Q$.
  \item For each unlabeled region $r'$ bordering~$r$:
    \begin{enumerate}
      \item Determine the unique indices $i\neq j$ such that $|x_i-x_j|<1$ in $r$ but $x_j > x_i + 1$ in
    $r'$.
  \item If $r$ is labeled by  $c=\sum_{k=0}^nc_kv_k$, then label $r'$ by
    $c'=c+v_j$. 
  \item Add $r'$ to the end of $Q$.  \hfill$\Box$
  \end{enumerate}
\end{enumerate}
For each $r\in\mathcal{R}$, define $\lambda(r)$ to be the label assigned to $r$
by the above algorithm.  Theorem~\ref{thm:labeling} guarantees that when
the algorithm terminates, the regions are labeled with quasi-superstables and
the labels are, in fact, independent of the order in which regions are removed
from the queue.

Recall the bijection $\rho\colon\mathbb{O}\to\mathcal{R}$, and let
$\tau=\rho^{-1}$.  Also recall the mapping~$\psi$ from Theorem~\ref{main} and
the mapping $\eta_{\sigma}$ from Corollary~\ref{cor-to-main}.
\begin{thm}\label{thm:labeling}
    We have $\lambda=\psi\circ\tau$. So there is a surjective mapping
\[
\xymatrix{
\lambda\colon\mathcal{R}\ar@{>>}[r]&\mathcal{S}
}
\]
and, for each $\sigma\in\mathfrak{S}$, a commutative diagram
\begin{equation}
\xymatrix{ 
\mathcal{S}\
\ar@{^{(}->}[rr]^{\eta_{\sigma}}&&\mathbb{O}\ar[d]^{\rho}_\shortparallel\\
  {}&&\mathcal{R}\ar@{>>}[llu]^{\lambda}.
  } 
\end{equation}
\end{thm}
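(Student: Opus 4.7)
The plan is to prove the identity $\lambda = \psi\circ\tau$ by induction on the distance of a region from the central region, and then to deduce both the surjectivity of $\lambda$ and the commutativity of the diagram as easy corollaries using Theorem~\ref{hyperplane bijection theorem}, Theorem~\ref{main}, and Corollary~\ref{cor-to-main}.

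First I would handle the base case. For the central region $r_0$, the defining inequalities give $|x_i-x_j|<1$ for every edge, so $\tau(r_0)=\emptyset$. Thus $\mathrm{indeg}_{\tau(r_0)}(v)=0$ for all $v$ and $\psi(\tau(r_0))=-1_V$, matching the Pak-Stanley initialization $\lambda(r_0)=-1_V$. For the inductive step, suppose $r,r'\in\mathcal{R}$ share a facet on the hyperplane $x_j-x_i=1$, with $|x_i-x_j|<1$ in $r$ and $x_j>x_i+1$ in $r'$ (the direction used by the algorithm). Crossing this single hyperplane changes only the inequality controlling the edge $\{v_i,v_j\}$; for every other edge $\{v_k,v_\ell\}$ of $G$, the two relevant inequalities $x_\ell-x_k=\pm 1$ are not crossed, so the $\tau$-status of $\{v_k,v_\ell\}$ is the same in $r$ and $r'$. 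Hence $\tau(r')=\tau(r)\cup\{(v_i,v_j)\}$, so $\mathrm{indeg}_{\tau(r')}(v_j)=\mathrm{indeg}_{\tau(r)}(v_j)+1$ while other indegrees are unchanged; therefore $\psi(\tau(r'))=\psi(\tau(r))+v_j$. This is precisely the Pak-Stanley increment rule, so if $\lambda(r)=\psi(\tau(r))$ then $\lambda(r')=\psi(\tau(r'))$.

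To complete the induction I need to know that BFS from $r_0$ actually visits every region, equivalently that every $r\in\mathcal{R}\setminus\{r_0\}$ has a neighbor ``closer'' to $r_0$. This is easy: if $r\neq r_0$ then $\tau(r)$ contains some oriented edge $(v_i,v_j)$; removing it yields $\tau(r)\setminus\{(v_i,v_j)\}$, which is still a $G$-semiorientation (removing an arrow only increases the blank-to-arrow ratio on every potential cycle), so by Theorem~\ref{hyperplane bijection theorem} it corresponds to an actual region $r''$, and $r''$ is adjacent to $r$ across the hyperplane $x_j-x_i=1$. Iterating decreases $|\tau(r)|$ and eventually reaches $r_0$. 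Reversing this walk shows $r$ is reachable from $r_0$ along the Pak-Stanley edges. In particular, since $\tau$ is a well-defined function on $\mathcal{R}$, the identity $\lambda=\psi\circ\tau$ automatically implies that the Pak-Stanley labeling is independent of the queue order.

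The remaining claims are then immediate. The map $\tau=\rho^{-1}$ is a bijection by Theorem~\ref{hyperplane bijection theorem}, and $\psi$ is surjective by Theorem~\ref{main} (e.g.\ because $\psi\circ\nu=\theta$ is surjective), so $\lambda=\psi\circ\tau$ is surjective onto $\mathcal{S}$. For the commutative diagram, Corollary~\ref{cor-to-main} gives $\psi\circ\eta_\sigma=\mathrm{id}_{\mathcal{S}}$, hence
\[
\lambda\circ\rho\circ\eta_\sigma=\psi\circ\tau\circ\rho\circ\eta_\sigma=\psi\circ\eta_\sigma=\mathrm{id}_{\mathcal{S}}.
\]
The only nontrivial step is the inductive identity relating the Pak-Stanley increment to the indegree change in $\tau$; once that is in hand, everything else falls out of results already established.
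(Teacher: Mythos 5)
Your core argument is the same as the paper's: the base case at the central region, the observation that crossing the single hyperplane $x_j-x_i=1$ changes $\tau$ by adjoining the one arrow $(v_i,v_j)$ and hence changes $\psi\circ\tau$ by $+v_j$, matching the Pak--Stanley increment, and then the deduction of surjectivity and the commutative triangle from Theorem~\ref{hyperplane bijection theorem}, Theorem~\ref{main}, and $\psi\circ\eta_\sigma=\mathrm{id}_{\mathcal S}$. That part is correct and is exactly how the paper proceeds (the paper leaves the final deductions implicit).

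However, the auxiliary reachability lemma you add --- which the paper glosses over, and which you are right to worry about --- is justified incorrectly. You claim that removing \emph{any} oriented edge from a $G$-semiorientation yields a $G$-semiorientation because ``removing an arrow only increases the blank-to-arrow ratio on every potential cycle.'' This ignores that blanking an arrow can create \emph{new} potential cycles. For example, let $G$ be the $4$-cycle on $v_1,v_2,v_3,v_4$ and $\mathcal O=\{(v_1,v_2),(v_2,v_3),(v_1,v_4)\}$. The unique cycle of $G$ is not a potential cycle for $\mathcal O$, since $(v_2,v_3)$ and $(v_1,v_4)$ point in opposite senses around it, so $\mathcal O\in\mathbb O$; but deleting $(v_1,v_4)$ leaves $\{(v_1,v_2),(v_2,v_3)\}$, for which the $4$-cycle is a potential cycle with two arrows and only two blanks, hence not a semiorientation. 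So the descent step as stated can fail. The conclusion you need is still true and can be obtained more robustly: take a generic straight segment from a point of the central region to a point of $r$. For each edge $\{v_i,v_j\}$ the function $x_i-x_j$ is affine along the segment and starts in $(-1,1)$, so it crosses at most one of the two hyperplanes $x_i-x_j=\pm1$, exactly once, and only in the outward direction; a generic choice of endpoints makes these crossings occur one hyperplane at a time, giving a path of facet-crossings from the central region to $r$ each of which adds a single arrow. With that substitution (or with an argument that \emph{some} suitably chosen arrow can always be removed), your proof is complete.
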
  
\begin{proof}
All the statements in the theorem follow directly from the equality
$\lambda=\psi\circ\tau$, which we now prove by induction.  
If $r$ is the central region then $\tau(r)$ is the partial orientation in
which all edges are marked blank.  Therefore, in this case, $\lambda(r)=\psi(\tau(r))$.

Let the labeling algorithm run, and suppose that the region $r$ has just been removed
from the queue.  By induction, suppose that $\lambda=\psi\circ\tau$ when
restricted to those regions that have been labeled so far.  Let $r'$ be an
unlabeled region bordering $r$.  Say that the inequalities that define $r$ are
the same as those that define $r'$ except that $|x_i-x_j|<1$ in $r$ and
$x_j>x_i+1$ in~$r'$.  It follows that if $\mathcal{O} = \tau(r)$ then $\tau(r') = \mathcal{O}
\cup \{(v_i,v_j)\}$.  Therefore, going from $\tau(r)$ to $\tau(r')$, only the
indegree of $v_j$ has increased by one, so
\begin{align*}
  \lambda(r')&=\lambda(r)+v_j&\qquad(\text{algorithm})\\
&=\psi(\tau(r))+v_j&\qquad(\text{induction})\\
&=\psi(\tau(r')).
\end{align*}
The result follows by induction.
\end{proof}

\section{Fixing a sink vertex}\label{sink}
In this section, we finally fully explain Figure~\ref{fig:example}.
For that figure, we started with a graph with a given sink vertex and
constructed a hyperplane arrangement with regions labeled by partial
orientations of the graph and sandpile configurations.  The nonnegative
configurations arising were exactly the superstables for the graph. 

Let $G$ be a graph with vertices $V=\{v_0,\dots,v_n\}$, and designate vertex $v_0$
as the sink.

\begin{definition}
The {\em $(G,v_0)$-semiorientations}, denoted $\mathbb{O}_0$, are the
$G$-semi\-orientations satisfying the additional requirement that $v_0$ is a
source:
\[
\mathbb{O}_0=\{\mathcal{O}\in\mathbb{O}: \mathrm{outdeg}_{\mathcal{O}}(v_0) =
\mathrm{deg}(v_0)\}.
\]
The set of {\em admissible} $(G,v_0)$-semiorientations is
\[
\widetilde{\mathbb{O}}_0=\{\mathcal{O}\in\mathbb{O}_0:\text{$\mathrm{indeg}_{\mathcal{O}}(v_i)
\geq 1$ for all $i \neq 0$}\}. 
\]
\end{definition}

Thus, while $v_0$ is the sink for the sandpile model on $G$---i.e., the sink for
the sake of defining the sandpile group, superstable configurations, and
$G$-parking functions---it is a source for any $\mathcal{O}\in\mathbb{O}_0$.

\begin{example}
  Figure~\ref{fig:example} shows the admissible $(G,q)$-semiorientations for the
  graph in Figure~\ref{fig:graph}. Due to lack of space, the requisite edges
  $(q,v_1)$ and $(q,v_2)$ are understood in Figure~\ref{fig:example}, but not
  drawn.
\end{example}

Our next goal is to describe the image of $\mathbb{O}_0$ under the mapping
$\psi\colon\mathbb{O}\to\mathcal{S}$ from Theorem~\ref{main}, thus accounting
for the configurations labeling the regions in Figure~\ref{fig:example}. We show
below that the image of $\mathbb{O}_0$ is the set of quasi-superstables 
assigning the value $-1$ to $v_0$ and whose only other negative values must occur at
vertices not connected to $v_0$ by an edge.  Further, the image of
$\widetilde{\mathbb{O}}_0
\subseteq A_0$ is the set of $G$-parking functions of $G$. 

Let
$\widetilde{V}=V\setminus\{v_0\}$, and let
\[
X=\{v\in\widetilde{V}:\{v,v_0\}\notin E\}.
\]
Define $1_X=\sum_{v\in X}v$, a configuration on $G$ (having chosen $v_0$ as the
sink).  Let $K(G)_0$ denote the graph $G$ but with an edge $\{v,v_0\}$ added for
each $v\in X$, and fix $v_0$ as its sink.  Thus, configurations on $G$ and on
$K(G)_0$ are elements of $\Z\widetilde{V}$, the free abelian group on
$\widetilde{V}$, a subgroup of~$\Z V$, the configurations on $K(G)$.  Recall
that $K(G)$ is the graph used to define quasi-superstables.
\begin{thm}\label{parking functions}\ 
  \begin{enumerate} 
    \item\label{ss/qss1} Define 
      \[
      \mathcal{S}_0=\{c\in\mathcal{S}:\text{$c_{v_0}=-1$ and $c+v_0\geq-1_X$}\}.
      \]
      Then
     \[ 
      \begin{array}[t]{rll}
      \psi(\mathbb{O}_0)&=&\{\tilde{c}-v_0: \text{$\tilde{c}+1_X$ a
      superstable on $K(G)_0$}\}\\[5pt]
      &=&\mathcal{S}_0.
     \end{array}
     \]
   \item\label{ss/qss2} Define
     \begin{align*}
       \widetilde{\mathcal{S}}_0&=\{c\in\mathcal{S}:\text{$c_{v_0}=-1$ and
         $c+v_0\geq0$}\}\subseteq\mathcal{S}_0.
     \end{align*}
     Then
     \begin{align*}
       \psi(\widetilde{\mathbb{O}}_0)&=\{\tilde{c}-v_0: \text{$\tilde{c}$ a superstable on $G$}\}\\
       &=\{\tilde{c}-v_0: \text{$\tilde{c}+1_X$ a superstable on $K(G)_0$ and
       $\tilde{c}\geq0$}\}\\[5pt]
       &=\widetilde{\mathcal{S}}_0.
     \end{align*}
     Thus, $\psi(\widetilde{\mathbb{O}}_0)=\widetilde{\mathcal{S}}_0$ is the set of $G$-parking
     functions with respect to~$v_0$.
  \end{enumerate}
\end{thm}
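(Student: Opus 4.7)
My plan is to prove each claimed equality by a direct inspection using the definition of $\psi$, then a reverse construction via the superstables algorithm of Section~\ref{ss-alg} with a carefully chosen vertex ordering, and finally translate the quasi-superstable descriptions into the superstability statements on $K(G)_0$ and $G$ by comparing the reduced Laplacians.

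For the forward inclusions $\psi(\mathbb{O}_0)\subseteq\mathcal{S}_0$ and $\psi(\widetilde{\mathbb{O}}_0)\subseteq\widetilde{\mathcal{S}}_0$: if $\mathcal{O}\in\mathbb{O}_0$, the source condition gives $\mathrm{indeg}_{\mathcal{O}}(v_0) = 0$, so $\psi(\mathcal{O})_{v_0} = -1$; for $v\in\widetilde{V}\setminus X$, the oriented edge $(v_0,v)$ forces $\mathrm{indeg}_{\mathcal{O}}(v)\ge 1$, so $\psi(\mathcal{O})_v\ge 0$; and for $v\in X$ the bound $\psi(\mathcal{O})_v\ge -1$ is trivial. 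This places $\psi(\mathcal{O})\in\mathcal{S}_0$. If in addition $\mathcal{O}\in\widetilde{\mathbb{O}}_0$, then $\mathrm{indeg}_{\mathcal{O}}(v)\ge 1$ for every $v\in\widetilde{V}$, and $\psi(\mathcal{O})\in\widetilde{\mathcal{S}}_0$.

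For the reverse inclusions, given $c\in\mathcal{S}_0$, pick $\sigma\in\mathfrak{S}$ with $\sigma(0)=0$ and set $\mathcal{O}:=\eta_{\sigma}(c)$, so $\psi(\mathcal{O})=c$ by Corollary~\ref{cor-to-main}. The superstables algorithm starts with $b = c_{\mathrm{max}} - c$ on $K(G)$; then $b_{v_0} = \deg_G(v_0)+1 = \deg_{K(G)}(v_0)$, making $v_0$ unstable, while $c_v\ge 0$ for $v\in\widetilde{V}\setminus X$ forces $b_v = \deg_G(v)-c_v \le \deg_G(v) < \deg_{K(G)}(v)$, so every $G$-neighbor of $v_0$ is stable in $b$. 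Since $v_0$ precedes every other vertex in $\sigma$, it fires first and the algorithm orients every edge $\{v_0,v\}$ outward, placing $v_0$ as a source in $\mathcal{O}$ and hence $\mathcal{O}\in\mathbb{O}_0$. For $c\in\widetilde{\mathcal{S}}_0$, the extra condition $c_v\ge 0$ for every $v\in\widetilde{V}$ gives $\mathrm{indeg}_{\mathcal{O}}(v)=c_v+1\ge 1$, so $\mathcal{O}\in\widetilde{\mathbb{O}}_0$.

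What remains is to identify $\mathcal{S}_0$ and $\widetilde{\mathcal{S}}_0$ with the superstability statements on $K(G)_0$ and on $G$. Writing the unique superstable on $K(G)$ corresponding to $c\in\mathcal{S}$ as $\tilde{c}'$, so $c=\tilde{c}'-1_V$, one computes $d:=c+v_0+1_X = \tilde{c}' - 1_{\widetilde{V}\setminus X}$; when $c_{v_0}=-1$ this has $v_0$-component $0$ and is indeed supported on $\widetilde{V}$. For every nonempty $Y\subseteq\widetilde{V}$ the failure-to-fire inequality on $K(G)$ reads $\tilde{c}'_v + |N_G(v)\cap Y|\le\deg_G(v)$ for some $v\in Y$; substituting $\tilde{c}'_v = d_v + 1 - [v\in X]$ converts this to $d_v + |N_G(v)\cap Y|\le\deg_G(v)+[v\in X]-1$, the corresponding $K(G)_0$ inequality, since the shift $1-[v\in X]$ in $\tilde{c}'_v-d_v$ exactly offsets $\deg_{K(G)}(v)-\deg_{K(G)_0}(v)=1-[v\in X]$. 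Subsets $Y\subseteq V$ containing $v_0$ are automatic at $v=v_0$: the inequality becomes $0+|N_G(v_0)\cap Y|\le\deg_G(v_0)$, always true. Hence $c\in\mathcal{S}_0$ iff $d$ is a superstable configuration on $K(G)_0$, which is part~(1). The same Laplacian bookkeeping between $K(G)_0$ and $G$, using $\deg_{K(G)_0}(v)-\deg_G(v) = [v\in X]$ and the shift $d_v-\tilde{c}_v = [v\in X]$ where $\tilde{c}=c+v_0$, yields that $d=\tilde{c}+1_X$ is superstable on $K(G)_0$ with $\tilde{c}\ge 0$ iff $\tilde{c}$ is superstable on $G$; combined with part~(1), this gives part~(2) and realizes $\widetilde{\mathcal{S}}_0$ as the set of $G$-parking functions with respect to $v_0$. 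The main obstacle is precisely this final bookkeeping: one must track the sink change (from $\tilde q$ to $v_0$), the vertex-set change ($V$ versus $\widetilde V$), and the role of $X$, and then verify sign-for-sign that the two compensating shifts align.
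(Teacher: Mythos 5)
Your proposal is correct and follows essentially the same route as the paper: the forward inclusions by reading off indegrees from the source condition at $v_0$, the reverse inclusions by running the superstables algorithm with $v_0$ first in the vertex ordering (so that it fires first into stable neighbors and becomes a source), and the identification of $\mathcal{S}_0$ and $\widetilde{\mathcal{S}}_0$ with superstability on $K(G)_0$ and $G$ by matching the set-firing inequalities across the degree and configuration shifts --- which is exactly the paper's Claim~A comparing $|M(v,U)|$ with $|M(v,U)_0|$. The only cosmetic difference is that in part~(\ref{ss/qss2}) you compare $G$ with $K(G)_0$ and chain through part~(\ref{ss/qss1}), while the paper compares $G$ directly with $K(G)$; the bookkeeping is the same.
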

\begin{proof} We first prove 
  \medskip

  \begin{quote}
  {\bf Claim A}: If $c\in\Z\widetilde{V}$, then $c+1_X$ is superstable on $K(G)_0$ if
  and only if $c-v_0+1_V$ is superstable on $K(G)$ and $c\geq-1_X$.   
  \end{quote}
  \medskip

  For any graph $H$, let $E_H$ denote its edges. Recall that $\tilde{q}$ is the
  sink vertex for $K(G)$.  Let $c\in\Z\widetilde{V}$ with $c\geq-1_X$, and consider $c-v_0+1_V$
  as a configuration on $K(G)$.  Let $U\subseteq V$.  If $v_0\in U$, then we
  cannot legally fire $U$ from $c-v_0+1_V$: since $(c-v_0+1_V)_{v_0}=0$ and
  $\{v_0,\tilde{q}\}\in E_{K(G)}$, firing $U$ would result in a configuration
  with a negative $v_0$-component.  Thus, $c-v_0+1_V$ is superstable if and only
  if there are no nonnempty subsets $U\subseteq\widetilde{V}$ that can be
  legally fired.  

  For $v\in U\subseteq\widetilde{V}$, consider the edges incident with $v$ that lead
  out of $U$:
  \begin{align*}
    M(v,U)&=\left\{w\in(V\cup\{\tilde{q}\})\setminus U:\{v,w\}\in
    E_{K(G)}\right\}\\
    M(v,U)_0&=\left\{w\in V\setminus U:\{v,w\}\in E_{K(G)_0}\right\}.
  \end{align*}
  Then,
  \[
  M(v,U)\setminus M(v,U)_0=\{\tilde{q}\}
  \quad\text{and}\quad
  M(v,U)_0\setminus M(v,U)=
  \begin{cases}
    \emptyset&\text{if $v\notin X$},\\
    \{v_0\}&\text{if $v\in X$}.
  \end{cases}
  \]
  So the cardinality of $M(v,U)$ is
  \begin{equation}\label{outedges}
  |M(v,U)|=
  \begin{cases}
    |M(v,U)_0|+1&\text{if $v\notin X$},\\
    |M(v,U)_0|&\text{if $v\in X$}.
  \end{cases}
\end{equation}
Now $c-v_0+1_V$ is superstable on $K(G)$ if and only if $(c-v_0+1_V)_v<|M(v,U)|$
for all $v\in U$ for all nonempty $U\subseteq\widetilde{V}$, and $c+1_X$ is superstable on
$K(G)_0$ if and only if $(c+1_X)_v<|M(v,U)_0|$ for all $v\in U$ for all nonempty
$U\subseteq\widetilde{V}$.  For $v\in\widetilde{V}$ we have
\[
(c-v_0+1_V)_v=c_v+1=
\begin{cases} 
  (c+1_X)_v+1&\text{if $v\notin X$},\\
  (c+1_X)_v&\text{if $v\in X$}.
\end{cases}
\]
So Claim~A follows from~(\ref{outedges}).  The condition $c\geq-1_X$ is
required in the statement of the claim since superstables must be nonnegative.

Now let 
\[
  P=\{\tilde{c}-v_0: \text{$\tilde{c}+1_X$ superstable on $K(G)_0$}\}.
\]
The fact that $P=\mathcal{S}_0$ follows directly from Claim~A.  We now show that
$\psi(\mathbb{O}_0)=\mathcal{S}_0$ to finish the proof of part~(\ref{ss/qss1}).  Let
$\mathcal{O}\in\mathbb{O}_0$.  Then $\psi(\mathcal{O})\in\mathcal{S}$ by
Theorem~\ref{main}.  Since $v_0$ is a source for $\mathcal{O}$, we have
$\psi(\mathcal{O})_{v_0}=-1$, and if $v\in\widetilde{V}\setminus X$, then
$\psi(\mathcal{O})_v\geq0$.  Thus, $\psi(\mathcal{O})\in\mathcal{S}_0$.  Conversely,
given~$c\in\mathcal{S}_0$, run the superstables algorithm from Section~\ref{ss-alg} with
any vertex ordering of~$V$ in which $v_0$ appears first.   Using the notation
from the initialization stage of the algorithm, let $b=c_{\mathrm{max}}-c$.
Since $c_{v_0}=-1$, the vertex $v_0$ is unstable in~$b$ and will fire first.
Since $c+v_0\geq-1_X$, no vertex $v\in\widetilde{V}\setminus X$ is unstable in~$b$.
So when $v_0$ fires, the algorithm will orient each edge incident on $v_0$ out
from $v_0$.  If $\mathcal{O}$ is the semiorientation produced by the algorithm, it
follows that $\mathcal{O}\in\mathbb{O}_0$ and, by Theorem~\ref{main}, we have
$\psi(\mathcal{O})=c$.  Thus, $\psi(\mathbb{O}_0)=\mathcal{S}_0$.

To prove part~(\ref{ss/qss2}), let
\begin{align*}
  N&=\{\tilde{c}-v_0: \text{$\tilde{c}$ a superstable on $G$}\},\\
  \widetilde{P}&= \{\tilde{c}-v_0: \text{$\tilde{c}+1_X$ a superstable on $K(G)_0$ and
       $\tilde{c}\geq0$}\}\subseteq P.
\end{align*}

From part~(\ref{ss/qss1}), it follows directly that
$\psi(\widetilde{\mathbb{O}}_0)=\widetilde{P}=\widetilde{\mathcal{S}}_0$.  To
show $N=\widetilde{\mathcal{S}}_0$ and finish, proceed exactly as in the proof
of Claim~A.  Let $\tilde{c}\in\Z\widetilde{V}$ with~$\tilde{c}\geq0$.  We must
show that $\tilde{c}$ is superstable on $G$ if and only if $\tilde{c}-v_0+1_V$
is superstable on~$K(G)$.  Given $v\in U\subset\widetilde{V}$, this time
consider the set
\[
M(v,U)_G=\{w\in V\setminus U:\{v,w\}\in E_G\},
\]
and note that $|M(v,U)|=|M(v,U)_G|+1$, with $M(v,U)$ defined as before, from
which the result follows.
\end{proof}

Part~\ref{maximal ss2} of the following corollary recaptures a well-known result
from sandpile theory.  For instance, it occurs as Lemma~5 in~\cite{biggs} as a
statement about recurrent configurations, equivalent to our statement in light
of Proposition~\ref{duality}.
\begin{cor}
 Order the superstable configurations on $G$ by the relation ``$<$'' defined in
 Section~\ref{quasi-superstables}.  Let $\mathcal{S}_{\mathrm{max}}$ denote the
 maximal quasi-superstables on $G$, and let $g:=|E|-|V|-1$ be the genus of $G$,
 as in Corollary~\ref{cor-to-main}.  Let~$\tilde{c}$ be a superstable
 configuration on $G$.  Then, 
 \begin{enumerate}
   \item\label{maximal ss1} $\tilde{c}$ is maximal if and only if
     $\tilde{c}-v_0\in\widetilde{\mathcal{S}}_0\cap\mathcal{S}_{\mathrm{max}}$,
   \item\label{maximal ss2} $\deg(\tilde{c}):=\sum_{v\in\widetilde{V}}\tilde{c}_v\leq g$ with equality if and only
     if $\tilde{c}$ is maximal.
 \end{enumerate}
\end{cor}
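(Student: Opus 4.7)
My plan is to prove part~(\ref{maximal ss1}) directly and then to deduce part~(\ref{maximal ss2}) by combining it with Corollary~\ref{cor-to-main}(\ref{cor-to-main2}). By Theorem~\ref{parking functions}(\ref{ss/qss2}), the assignment $\tilde c \mapsto \tilde c - v_0$ is an order-preserving bijection from the superstable configurations on $G$ onto $\widetilde{\mathcal{S}}_0$. Writing $c := \tilde c - v_0$, part~(\ref{maximal ss1}) amounts to the claim that $\tilde c$ is a maximal superstable if and only if $c$ is maximal not merely in $\widetilde{\mathcal{S}}_0$ but in the ambient set $\mathcal{S}$, i.e., $c \in \mathcal{S}_{\mathrm{max}}$.

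The ``if'' direction is immediate: if $c \in \mathcal{S}_{\mathrm{max}}$ and $\tilde c < \tilde c'$ for some superstable $\tilde c'$ on $G$, then $c < \tilde c' - v_0 \in \widetilde{\mathcal{S}}_0 \subseteq \mathcal{S}$, a contradiction. For the ``only if'' direction, suppose $\tilde c$ is maximal. By Theorem~\ref{parking functions}(\ref{ss/qss2}), $c = \psi(\mathcal{O})$ for some $\mathcal{O} \in \widetilde{\mathbb{O}}_0$. The crucial step is to extend $\mathcal{O}$ to a full acyclic orientation $\mathcal{O}'$. Because $\mathcal{O}$ is a $G$-semiorientation, its oriented edges contain no directed cycle (a directed cycle would be a potential cycle with zero blanks, violating the semiorientation condition); so a linear order on $V$ compatible with the oriented edges of $\mathcal{O}$ exists, and orienting each blank edge from the smaller to the larger vertex in this order yields $\mathcal{O}' \in \mathbb{O}_{\mathrm{max}}$ with $\mathcal{O} \subseteq \mathcal{O}'$. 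Since $\mathcal{O}$ already has $v_0$ as a source and every other vertex of indegree at least $1$, the same holds for $\mathcal{O}'$, giving $\mathcal{O}' \in \widetilde{\mathbb{O}}_0 \cap \mathbb{O}_{\mathrm{max}}$. Hence $c' := \psi(\mathcal{O}') \in \widetilde{\mathcal{S}}_0$, and $c' \in \mathcal{S}_{\mathrm{max}}$ by Corollary~\ref{cor-to-main}(\ref{cor-to-main1}). If $\mathcal{O}$ had any blank edges, the extension would strictly increase the total indegree and hence the indegree at some vertex, so $c' > c$ coordinatewise; then $c' + v_0 > \tilde c$ would be a superstable exceeding $\tilde c$, contradicting maximality. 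Therefore $\mathcal{O}$ is already in $\mathbb{O}_{\mathrm{max}}$, and $c = c' \in \mathcal{S}_{\mathrm{max}}$.

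For part~(\ref{maximal ss2}), the relation $\deg(\tilde c) = \deg(c) + 1$ together with Corollary~\ref{cor-to-main}(\ref{cor-to-main2}) gives $\deg(\tilde c) \leq (g-1) + 1 = g$, with equality precisely when $c \in \mathcal{S}_{\mathrm{max}}$; by part~(\ref{maximal ss1}) this is equivalent to $\tilde c$ being a maximal superstable.

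The main obstacle I anticipate is the extension step in the ``only if'' direction of part~(\ref{maximal ss1}): one must verify both that a full acyclic extension of $\mathcal{O}$ exists (which follows from acyclicity of the oriented part of $\mathcal{O}$, granted by the $G$-semiorientation condition, via a topological sort) and that the extension retains membership in $\widetilde{\mathbb{O}}_0$ (which is routine since $\mathcal{O} \subseteq \mathcal{O}'$ preserves the source property at $v_0$ and only raises nonsink indegrees).
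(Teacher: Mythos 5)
Your proof is correct and follows essentially the same route as the paper: both reduce part~(\ref{maximal ss1}) to showing that a semiorientation $\mathcal{O}\in\widetilde{\mathbb{O}}_0$ with $\psi(\mathcal{O})=\tilde{c}-v_0$ extends to an acyclic orientation still lying in $\widetilde{\mathbb{O}}_0$ (which forces $c'>c$ whenever a blank edge remains), and both deduce part~(\ref{maximal ss2}) from Corollary~\ref{cor-to-main}~(\ref{cor-to-main2}) via $\deg(\tilde{c})=\deg(\tilde{c}-v_0)+1$. The only divergence is that you construct the acyclic extension by a topological sort of the oriented part of $\mathcal{O}$ (valid, since a directed cycle of oriented edges would be a potential cycle with no blanks), whereas the paper reuses the interval-perturbation argument from the proof of Corollary~\ref{cor-to-main}~(\ref{cor-to-main1}); both work.
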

\begin{proof}
  By Theorem~\ref{parking functions}~(\ref{ss/qss2}),
  $\tilde{c}-v_0\in\widetilde{\mathcal{S}}_0$.  Hence, part~(\ref{maximal ss2})
  follows immediately from part~(\ref{maximal ss1}) by
  Corollary~\ref{cor-to-main}~(\ref{cor-to-main2}).  We now
  prove part~(\ref{maximal ss1}).
  
  \noindent($\Rightarrow$)  Suppose
  $\tilde{c}-v_0\notin\mathcal{S}_{\mathrm{max}}$.  By Theorem~\ref{parking
  functions}~(\ref{ss/qss2}), there exists
  $\mathcal{O}\in\widetilde{\mathbb{O}}_0$ such that
  $\psi(\mathcal{O})=\tilde{c}-v_0$.  By Corollary~\ref{cor-to-main}, we have
  $\mathcal{O}\notin\mathbb{O}_{\mathrm{max}}$, i.e., $\mathcal{O}$ is not an
  acyclic orientation of $G$, but by the beginning of the proof to
  Corollary~\ref{cor-to-main}, there exists
  $\mathcal{O}'\in\mathbb{O}_{\mathrm{max}}$ such that $\mathcal{O}$ is a proper
  subset of $\mathcal{O}'$.  It follows that
  $\psi(\mathcal{O})=\tilde{c}-v_0<\psi(\mathcal{O}')=:c'$, where $c'\in\mathcal{S}$.  Since
  $\mathcal{O}\subset\mathcal{O}'$ and $\mathcal{O}'$ is acyclic, we have
  $c'_{v_0}=-1$ and $c'+v_0\geq0$, i.e., $c'\in\widetilde{\mathcal{S}}_0$.
  Define $\tilde{c\,}'$ by $c'=\tilde{c\,}'-v_0$.  Then~$\tilde{c\,}'$ is a
  superstable configuration on $G$ by Theorem~\ref{parking functions}, and
  $\tilde{c}<\tilde{c\,}'$.  So~$\tilde{c}$ is not a maximal superstable.

  \noindent($\Leftarrow$)  Conversely, suppose
  $\tilde{c}-v_0\in\mathcal{S}_{\mathrm{max}}$.  Take any superstable
  configuration~$\tilde{c\,}'$ such that $\tilde{c}\leq\tilde{c\,}'$.  Then
  $\tilde{c\,}'-v_0\in\mathcal{S}$ by Theorem~\ref{parking
  functions}~(\ref{ss/qss2}), and $\tilde{c}-v_0\leq\tilde{c\,}'-v_0$.  By
  maximality of $\tilde{c}-v_0$, it follows that $\tilde{c}=\tilde{c\,}'$.
  Hence,~$\tilde{c}$ is a maximal superstable.
\end{proof}
\begin{definition}\label{sink semiorder arrangement}
The {\em $(G,v_0)$-semiorder arrangement}, denoted $\mathscr{I}_0$, is the set
of hyperplanes in $\R^n$ given by
\[
x_i-x_j=1,
\]
for all $i,j$ not equal to $0$ such that $\{v_i, v_j\}\in E$.    
\end{definition}

\begin{definition}
The {\em regions} of $\mathscr{I}_0$, denoted $\mathcal{R}_0$, are the 
connected components of $\R^n\setminus\mathscr{I}_0$.
\end{definition}

Define the subset
\[
T_0=\{(x_0,\dots,x_n)\in\R^{n+1}:\text{$x_i>x_0+1$ whenever 
$\{v_i,v_0\}\in E_G$}\}
\]
and let
\[
\mathcal{R}'_0=\{r\in\mathcal{R}:r\subseteq T_0\}.
\]
The elements of $\mathcal{R}'_0$ are exactly those regions with corresponding
semiorientations (under $\rho$) having $v_0$ as a source.  Hence, the bijection
$\rho\colon\mathbb{O}\to\mathcal{R}$ restricts to a bijection
$\mathbb{O}_0\to\mathcal{R}_0'$.  The projection mapping
$(x_0,\dots,x_n)\to(x_1,\dots,x_n)$, omitting the $0$-th coordinate, induces a
bijection
\[
\pi_0\colon\mathcal{R}_0'\to\mathcal{R}_0.
\]
Therefore, we have the following theorem.
\begin{thm}\label{thm:affine bijection} The mapping
\[
\rho_0:=\pi_0\circ\rho\colon\mathbb{O}_0\to\mathcal{R}_0
\]
is a bijection.
\end{thm}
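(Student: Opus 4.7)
The theorem is essentially bookkeeping built on Theorem~\ref{hyperplane bijection theorem} together with the observation that, inside the half-space $T_0$, no hyperplane of $\mathscr{I}$ that involves $x_0$ is ever crossed. My plan is to prove the two claims asserted in the paragraph preceding the statement, namely
that (i) $\rho$ restricts to a bijection $\mathbb{O}_0\to\mathcal{R}'_0$, and (ii) $\pi_0\colon\mathcal{R}'_0\to\mathcal{R}_0$ is a bijection; the theorem is then immediate by composition.

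For (i), I would simply unwind the definitions. Given $\mathcal{O}\in\mathbb{O}_0$, every edge $\{v_0,v_i\}\in E$ is oriented as $(v_0,v_i)\in\mathcal{O}$, so the defining inequalities of $\rho(\mathcal{O})$ include $x_i>x_0+1$ for every neighbor $v_i$ of $v_0$; hence $\rho(\mathcal{O})\subseteq T_0$. Conversely, if $r\in\mathcal{R}$ lies in $T_0$, then every edge $\{v_0,v_i\}$ satisfies $x_i>x_0+1$ on $r$, and by the definition of $\tau=\rho^{-1}$ this forces $(v_0,v_i)\in\tau(r)$, so $v_0$ is a source of $\tau(r)$ and $\tau(r)\in\mathbb{O}_0$. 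Since $\rho$ is already a bijection on all of $\mathbb{O}$, this restricts to the desired bijection.

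For (ii), the main point is that inside $T_0$ one has $x_i-x_0>1$ (so, in particular, $x_i-x_0\neq 1$) for every neighbor $v_i$ of $v_0$, and also $x_0-x_i<-1<1$, so $x_0-x_i\neq 1$. Thus no hyperplane of $\mathscr{I}$ of the form $x_0-x_i=1$ or $x_i-x_0=1$ meets $T_0$, and the only hyperplanes of $\mathscr{I}$ that can appear as walls of a region $r\subseteq T_0$ are those of the subarrangement $\mathscr{I}_0$. I would make this precise as follows: given $r\in\mathcal{R}_0$ and any point $(x_1,\dots,x_n)\in r$, the whole open half-line
\[
\left\{(x_0,x_1,\dots,x_n): x_0<\min_{\{v_i,v_0\}\in E}(x_i-1)\right\}
\]
lies in $T_0$ and meets no hyperplane of $\mathscr{I}$; hence it lies in a single region $r'\in\mathcal{R}$, and $\pi_0(r')=r$. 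Conversely, if $r'_1,r'_2\in\mathcal{R}'_0$ both project to the same $r$, pick $(x_1,\dots,x_n)\in r$ and a corresponding $x_0$ small enough to lie in both $r'_1$ and $r'_2$; the unique region of $\mathscr{I}$ containing that point then equals both $r'_1$ and $r'_2$.

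Composing the bijection of (i) with the bijection of (ii) yields $\rho_0=\pi_0\circ\rho\colon\mathbb{O}_0\to\mathcal{R}_0$ as required. The only place a reader could get stuck is verifying that $T_0$ is a union of regions of $\mathscr{I}$ (so that a region meeting $T_0$ lies entirely in $T_0$); this is immediate because $T_0$ is cut out by strict inequalities on some of the hyperplanes of $\mathscr{I}$. No further machinery beyond Theorem~\ref{hyperplane bijection theorem} is needed.
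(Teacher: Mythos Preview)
Your proposal is correct and follows exactly the approach the paper takes: the paper's entire proof is the short paragraph preceding the theorem, which asserts (without further detail) precisely your claims (i) and (ii), and concludes. You have simply filled in the details that the paper leaves to the reader, using the definition of $\rho$ and $\tau$ for (i) and the half-line/fiber argument for (ii); nothing substantively different is happening.
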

\begin{example}
  The $(G,v_0)$-semiorder arrangement for graph $G$ of
  Figure~\ref{fig:graph} with $v_0=q$ is drawn in Figure~\ref{fig:example}.  Its
  regions, $\mathcal{R}_0$, are the projections of the regions 
  $\mathcal{R}'_0$ displayed in Figure~\ref{fig:choice}.
\end{example}
The central region of $\mathscr{I}_0$ is the region defined by $|x_i - x_j| < 1$
for all distinct $i,j$ not equal to $0$ such that $\{v_i,v_j\}$ is an edge of
$G$.  Inductively label the regions of $\mathscr{I}_0$ as
in Section~(\ref{labeling}), but starting with the central region labeled with the
configuration that assigns $0$ to all $v_i$ such that $\{v_i,v_0\} \in E$ and
$-1$ to all other vertices, including $v_0$.  For each $r \in \mathcal{R}_0$, define
$\lambda_0(r)$ to be the label assigned to $r$ in this fashion.

Define $\tau_0=\rho_0^{-1}$.  There is a version of Theorem~\ref{labeling} in this
context (proved similarly):
\begin{thm}\label{thm:affine labeling}
We have $\lambda_0=\psi\circ\tau_0$. So there is a surjective mapping
\[
\xymatrix{
\lambda_0\colon\mathcal{R}_0\ar@{>>}[r]&\mathcal{S}_0
}
\]
and, for each $\sigma\in\mathfrak{S}_0$, a commutative diagram
\begin{equation}
\xymatrix{ 
\mathcal{S}_0\
\ar@{^{(}->}[rr]^{\eta_{\sigma,0}}&&\mathbb{O}_0\ar[d]^{\rho_0}_\shortparallel\\
  {}&&\mathcal{R}_0\ar@{>>}[llu]^{\lambda_0},
  } 
\end{equation}
where $\eta_{\sigma,0}$ is the restriction of $\eta_{\sigma}$ to
$\mathcal{S}_0$.
\end{thm}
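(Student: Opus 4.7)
The plan is to mimic the proof of Theorem~\ref{thm:labeling} almost verbatim, replacing $\rho, \tau, \mathbb{O}, \mathcal{R}, \mathcal{S}$ by their sinked counterparts $\rho_0, \tau_0, \mathbb{O}_0, \mathcal{R}_0, \mathcal{S}_0$. The key claim to prove is $\lambda_0 = \psi \circ \tau_0$; once this is in hand, the surjective mapping $\lambda_0\colon\mathcal{R}_0 \twoheadrightarrow \mathcal{S}_0$ follows from $\psi(\mathbb{O}_0) = \mathcal{S}_0$ (Theorem~\ref{parking functions}~(\ref{ss/qss1})) together with the bijectivity of $\rho_0$ (Theorem~\ref{thm:affine bijection}), and the commutative diagram collapses to the identity $\psi \circ \eta_{\sigma,0} = \mathrm{id}_{\mathcal{S}_0}$ (the left-inverse statement of Corollary~\ref{cor-to-main}, restricted appropriately) together with $\rho_0 \circ \eta_{\sigma,0} = \rho_0 \circ \eta_{\sigma,0}$, which commutes with $\lambda_0 = \psi \circ \rho_0^{-1}$ by an easy chase.

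First I would verify the base case. The central region $r_*$ of $\mathscr{I}_0$ is the region of $\mathbb{R}^n$ where $|x_i-x_j|<1$ for all edges $\{v_i,v_j\}\in E$ with $i,j\neq 0$. Under the lift $\pi_0^{-1}$, this corresponds to a region of $\mathscr{I}$ contained in $T_0$, namely the one cut out by $|x_i-x_j|<1$ for all interior edges together with $x_i>x_0+1$ for each $\{v_0,v_i\}\in E$. Under $\tau=\rho^{-1}$, this region is sent to the partial orientation $\mathcal{O}_*$ whose only oriented edges are $(v_0,v_i)$ for $v_i$ adjacent to $v_0$; $\mathcal{O}_*\in\mathbb{O}_0$ since $v_0$ is its only source and all edges out of $v_0$ are oriented out. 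Therefore $\tau_0(r_*)=\mathcal{O}_*$. Computing $\psi(\mathcal{O}_*)$: the indegree of $v_0$ is $0$, giving coefficient $-1$; the indegree of a vertex $v_i$ adjacent to $v_0$ is $1$, giving coefficient $0$; and the indegree of any other $v_i$ is $0$, giving coefficient $-1$. This is precisely the initial label placed on the central region by the $\lambda_0$-algorithm.

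Next I would handle the inductive step exactly as in Theorem~\ref{thm:labeling}. Suppose $r,r'\in\mathcal{R}_0$ share a wall $x_j=x_i+1$ with $|x_i-x_j|<1$ in $r$ and $x_j>x_i+1$ in $r'$, and assume inductively that $\lambda_0(r)=\psi(\tau_0(r))$. Lifting to $T_0$ and applying the description of $\tau$ from the proof of Theorem~\ref{hyperplane bijection theorem}, we see that $\tau_0(r')=\tau_0(r)\cup\{(v_i,v_j)\}$. Thus the indegree at $v_j$ increases by one while all other indegrees stay fixed, so $\psi(\tau_0(r'))=\psi(\tau_0(r))+v_j=\lambda_0(r)+v_j=\lambda_0(r')$, the last equality being the definition of the labeling algorithm. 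Since $\mathcal{R}_0$ is connected via adjacency, this propagates the identity $\lambda_0=\psi\circ\tau_0$ to all of $\mathcal{R}_0$; it also shows, as a byproduct, that the inductive labeling is well-defined (independent of queue order), since $\psi\circ\tau_0$ is intrinsically defined.

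With $\lambda_0=\psi\circ\tau_0$ established, surjectivity of $\lambda_0$ onto $\mathcal{S}_0$ is $\psi(\tau_0(\mathcal{R}_0))=\psi(\mathbb{O}_0)=\mathcal{S}_0$, using $\tau_0$ bijective (Theorem~\ref{thm:affine bijection}) and Theorem~\ref{parking functions}~(\ref{ss/qss1}). For the diagram, $\eta_{\sigma,0}$ maps $\mathcal{S}_0$ into $\mathbb{O}_0$ (we should note here that if $c\in\mathcal{S}_0$ and $\sigma$ lists $v_0$ first, the superstables algorithm fires $v_0$ first and orients all edges incident on $v_0$ outward from it, so $\eta_\sigma(c,\sigma)\in\mathbb{O}_0$; this is the one small check beyond Corollary~\ref{cor-to-main}), and then $\lambda_0\circ\rho_0\circ\eta_{\sigma,0}=\psi\circ\tau_0\circ\rho_0\circ\eta_{\sigma,0}=\psi\circ\eta_{\sigma,0}=\mathrm{id}_{\mathcal{S}_0}$ by the left-inverse property in Corollary~\ref{cor-to-main}. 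The only subtle point, and the mildest obstacle, is ensuring that the central-region label is correctly reproduced by $\psi\circ\tau_0$; once that matches, the inductive step is cosmetically identical to Theorem~\ref{thm:labeling} and no new ideas are needed.
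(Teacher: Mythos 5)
Your proposal is correct and follows exactly the route the paper intends: the paper's own ``proof'' of Theorem~\ref{thm:affine labeling} is simply the remark that it is ``proved similarly'' to Theorem~\ref{thm:labeling}, and your write-up supplies precisely the details that remark elides (the lifted central region in $T_0$, the base-case computation of $\psi(\mathcal{O}_*)$, the unchanged inductive wall-crossing step, and the check via Theorem~\ref{parking functions}~(\ref{ss/qss1}) that $\eta_{\sigma,0}$ lands in $\mathbb{O}_0$ when $\sigma$ lists $v_0$ first). Nothing further is needed.
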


For $i=1,\dots,n$, if $\{v_i,v_0\}\in E_G$, let
\[
\R^n_{(i,0)}=\R^n,
\]
otherwise, if $\{v_i,v_0\}\notin E_G$, let
\[
\R^n_{(i,0)}= \{(x_1,\dots,x_n)\in\R^n: \text{ $x_i>x_j+1$ for some $j$ with $\{v_i,v_j\}\in E$}\}.
\]
Define $\widetilde{\R}^n=\bigcap_{i=1}^n\R^n_{(i,0)}$.
\begin{definition}
  The {\em admissible regions} of $\mathscr{I}_0$, denoted
  $\widetilde{\mathcal{R}}_0$, are the connected components of
  $\widetilde{\R}^n\setminus\mathscr{I}_0$.
\end{definition}
Note that $\widetilde{\mathcal{R}}_0=\mathcal{R}_0$ exactly when each nonsink vertex is connected by an edge to the sink.

The admissible regions of $\mathscr{I}_0$ are exactly those regions whose
corresponding $G$-semiorientations satisfy
$\mathrm{indeg}_{\mathcal{O}}(v_i) \geq 1$ for all $i \neq 0$.  So combining
Theorem~\ref{parking functions} and Theorem~\ref{thm:affine labeling} gives
\begin{thm}\label{thm:Pak-Stanley}
\[
\lambda_0(\widetilde{\mathcal{R}}_0) = \widetilde{\mathcal{S}}_0
=\{c-v_0: \text{$c$ a superstable on $G$}\}.
\]
\end{thm}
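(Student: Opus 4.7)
The plan is to assemble the theorem from three pieces already available in the paper. First, Theorem~\ref{thm:affine labeling} gives the factorization $\lambda_0 = \psi \circ \tau_0$, so it suffices to identify $\tau_0(\widetilde{\mathcal{R}}_0)$ and then push it through $\psi$. Since Theorem~\ref{parking functions}(\ref{ss/qss2}) already provides the equality $\psi(\widetilde{\mathbb{O}}_0) = \widetilde{\mathcal{S}}_0 = \{c - v_0 : c \text{ a superstable on } G\}$, the entire proof reduces to the set-theoretic identity
\[
\tau_0(\widetilde{\mathcal{R}}_0) = \widetilde{\mathbb{O}}_0.
\]

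The geometric heart of the argument is proving this identity, which formalizes the parenthetical remark ``The admissible regions of $\mathscr{I}_0$ are exactly those regions whose corresponding $G$-semiorientations satisfy $\mathrm{indeg}_{\mathcal{O}}(v_i) \geq 1$ for all $i \neq 0$'' stated just before the theorem. Fix $r \in \mathcal{R}_0$ and set $\mathcal{O} = \tau_0(r) \in \mathbb{O}_0$ (so $v_0$ is automatically a source in $\mathcal{O}$). I would then verify, for each $i \neq 0$, the equivalence between $r \subseteq \R^n_{(i,0)}$ and $\mathrm{indeg}_{\mathcal{O}}(v_i) \geq 1$, splitting into two cases. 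When $\{v_i, v_0\} \in E$, the half-space $\R^n_{(i,0)} = \R^n$ is trivial, and indegree at $v_i$ is automatic because $v_0$ is a source, forcing $(v_0, v_i) \in \mathcal{O}$. When $\{v_i, v_0\} \notin E$, every hyperplane appearing in the definition of $\R^n_{(i,0)}$ already lies in $\mathscr{I}_0$, so the open connected region $r$ sits strictly on one side of each; hence $r \subseteq \R^n_{(i,0)}$ if and only if $x_i > x_j + 1$ throughout $r$ for some $j$ with $\{v_i, v_j\} \in E$, which is exactly the statement that $(v_j, v_i) \in \mathcal{O}$ for some such $j$. Intersecting over all $i \neq 0$ yields $r \in \widetilde{\mathcal{R}}_0$ if and only if $\mathcal{O} \in \widetilde{\mathbb{O}}_0$, establishing the identity.

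With $\tau_0(\widetilde{\mathcal{R}}_0) = \widetilde{\mathbb{O}}_0$ in hand, I would conclude
\[
\lambda_0(\widetilde{\mathcal{R}}_0) \;=\; \psi(\tau_0(\widetilde{\mathcal{R}}_0)) \;=\; \psi(\widetilde{\mathbb{O}}_0) \;=\; \widetilde{\mathcal{S}}_0 \;=\; \{c - v_0 : c \text{ a superstable on } G\},
\]
invoking Theorem~\ref{thm:affine labeling} for the first equality and Theorem~\ref{parking functions}(\ref{ss/qss2}) for the last two. The main (and only real) obstacle is ensuring that no region of $\mathscr{I}_0$ is split by the cut-out $\widetilde{\R}^n$, which is what makes the case-by-case translation above unambiguous; once one observes that the half-spaces defining $\R^n_{(i,0)}$ are bounded by hyperplanes already in $\mathscr{I}_0$, this is immediate from the openness and connectedness of each region.
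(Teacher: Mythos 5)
Your proposal is correct and follows essentially the same route as the paper, which simply asserts the identification of admissible regions with admissible semiorientations in the sentence preceding the theorem and then combines Theorem~\ref{parking functions}(\ref{ss/qss2}) with Theorem~\ref{thm:affine labeling}. Your only addition is to spell out the case analysis behind $\tau_0(\widetilde{\mathcal{R}}_0)=\widetilde{\mathbb{O}}_0$ (including the observation that each region lies entirely inside or outside each half-space defining $\R^n_{(i,0)}$), which the paper leaves implicit; that verification is accurate.
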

\begin{example}
  The $9$ admissible regions in Figure~\ref{fig:example} are labeled by the $8$
  distinct superstables (or $G$-parking functions if one remembers that the sink
  is labeled by~$-1$).  The zero-configuration appears twice.
\end{example}

\section{Conclusion.}
Let $A$ be an $(n+1)\times(n+1)$ matrix.  Define $\mathcal{H}_A$ to be the set of
hyperplanes \[ x_i - x_j = A_{ij}, \] for all $i \neq j$ such that $\{v_i, v_j\}
\in E$.  For example, $\mathcal{H}_A = \mathscr{I}$ if $A$ has all $1$s as its
entries.

Define the \emph{regions} of $\mathcal{H}_A$, denoted $\mathcal{R}_A$, to be the
connected components of $\R^{n+1} \setminus \mathcal{H}_A$. The set of
inequalities $x_i-x_j<A_{ij}$ for all $i$ and $j$ defines the {\em central region} of $\mathcal{H}_A$.  We say that {\em
$\mathcal{H}_A$ has a central region} if this central region is nonempty. 

\begin{conj} Suppose that $\mathcal{H}_A$ has a central region.  Labeling the
  regions of $\mathcal{H}_A$ as in Section~\ref{labeling} defines a surjection
  \[ 
  \xymatrix{ 
    \lambda_A\colon\mathcal{R}_A\ar@{>>}[r]&\mathcal{S}.
    } 
  \]
\end{conj}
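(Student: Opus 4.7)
The plan is to first generalize the results of Section~\ref{section:correspondences} to the arrangement $\mathcal{H}_A$, reducing the conjecture to a purely combinatorial statement about partial orientations, and then to prove that statement by a continuous deformation from the all-ones matrix to $A$.

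For an arbitrary matrix $A$, let $\mathbb{O}_A$ denote the set of partial orientations $\mathcal{O}$ of $G$ such that the linear system
\[
x_j - x_i > A_{ij}\ \text{for each}\ (v_i,v_j)\in\mathcal{O},\qquad x_i - x_j < A_{ij}\ \text{for each blank edge}\ \{v_i,v_j\},
\]
is feasible in $\R^{n+1}$. The Farkas-lemma argument of Theorem~\ref{hyperplane bijection theorem} adapts \emph{mutatis mutandis} to show that the analogously defined map $\rho_A\colon\mathbb{O}_A\to\mathcal{R}_A$ is a bijection. Likewise, the induction of Theorem~\ref{thm:labeling} adapts to show $\lambda_A=\psi\circ\rho_A^{-1}$, where I use the indegree-minus-one map $\psi(\mathcal{O})=\sum_v(\mathrm{indeg}_{\mathcal{O}}(v)-1)\,v$ of Theorem~\ref{main}, now applied to arbitrary partial orientations. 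Thus the conjecture is equivalent to the assertion $\psi(\mathbb{O}_A)=\mathcal{S}$.

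Next I would use a continuous deformation. The set of matrices $A$ for which $\mathcal{H}_A$ has a central region is convex: it is the projection to the $A$-coordinates of the convex set $\{(A,x)\colon x_i-x_j<A_{ij}\text{ for all edges}\}$. Hence the straight-line path $A(t)=(1-t)A_0+tA$ from the all-ones matrix $A_0$ to $A$ consists entirely of matrices with a nonempty central region. At $t=0$, Theorem~\ref{thm:labeling} combined with Theorem~\ref{main} gives $\psi(\mathbb{O}_{A(0)})=\mathcal{S}$, and the plan is to show this equality persists for all $t\in[0,1]$. At non-critical $t$ the combinatorial type of $\mathcal{H}_{A(t)}$ is locally constant, so $\mathbb{O}_{A(t)}$ is unchanged. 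Only at the finitely many critical values of $t$---where hyperplanes of $\mathcal{H}_{A(t)}$ acquire new common flats---can $\mathbb{O}_{A(t)}$ change, as some partial orientations gain or lose feasibility while the central region persists by convexity.

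The main obstacle will be establishing invariance of $\psi(\mathbb{O}_{A(t)})$ across each such wall. When an orientation $\mathcal{O}$ transitions between feasibility and infeasibility at a critical $t=t_*$, Farkas' lemma produces a nonnegative combination of the defining inequalities that vanishes as a linear form in $x$ and becomes tight in the constant term at $t=t_*$, thereby identifying a distinguished ``potential-cycle-like'' substructure of $\mathcal{O}$ responsible for the transition. The hard part will be proving that this substructure can always be modified locally so as to exchange $\mathcal{O}$ for another orientation $\mathcal{O}'$ that is feasible on the other side of the wall and has the same indegree sequence, giving $\psi(\mathcal{O})=\psi(\mathcal{O}')$. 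A more direct alternative would be, for each $c\in\mathcal{S}$, to construct an explicit feasible orientation in $\mathbb{O}_A$ with $\psi$-image $c$---for instance by running the superstables algorithm of Section~\ref{ss-alg} with a vertex ordering chosen adaptively from $A$---but verifying that such an $A$-adapted algorithm always yields a feasible orientation appears to involve essentially the same technical core.
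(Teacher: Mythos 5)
This statement is a \emph{conjecture}: the paper offers no proof of it, and indeed it is presented as an open problem whose special cases include the $G$-Shi conjecture of Duval, Klivans, and Martin. So the relevant question is whether your proposal actually closes it, and it does not. Your preliminary reductions are sound: defining $\mathbb{O}_A$ as the set of feasible partial orientations makes $\rho_A\colon\mathbb{O}_A\to\mathcal{R}_A$ a bijection essentially by definition (each region records its side of every hyperplane, and is itself the feasibility witness), the induction of Theorem~\ref{thm:labeling} does carry over to give $\lambda_A=\psi\circ\rho_A^{-1}$, and the set of matrices $A$ admitting a central region is indeed convex, so the segment from the all-ones matrix to $A$ stays inside it. This correctly reduces the conjecture to the assertion $\psi(\mathbb{O}_A)=\mathcal{S}$ and isolates where the difficulty lives. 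Note that for general $A$ the set $\mathbb{O}_A$ is genuinely different from $\mathbb{O}$ (the telescoping sum around a potential cycle no longer yields the clean ``more blanks than arrows'' criterion, since the bound now depends on the entries of $A$ along the cycle), so even the containment $\psi(\mathbb{O}_A)\subseteq\mathcal{S}$ is unproven and is part of what your wall-crossing step would have to deliver.

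The gap is the wall-crossing lemma itself, and it is not a technicality: it is the entire content of the conjecture. You need to show that when a partial orientation $\mathcal{O}$ loses feasibility at a critical value $t_*$ along the path $A(t)$, some $\mathcal{O}'$ with the same indegree sequence is feasible on the far side (and conversely, that newly feasible orientations do not introduce indegree sequences outside $\mathcal{S}$). You describe the Farkas certificate that becomes tight at $t_*$ and assert that the responsible substructure ``can always be modified locally,'' but you give no construction and no argument that such an indegree-preserving exchange exists; several orientations may gain and lose feasibility simultaneously at a single wall, and the certificate need not be supported on a single cycle. Your proposed alternative --- an $A$-adapted version of the superstables algorithm producing, for each $c\in\mathcal{S}$, a feasible orientation with $\psi$-image $c$ --- would only establish surjectivity, not the containment $\psi(\mathbb{O}_A)\subseteq\mathcal{S}$, and as you concede, verifying feasibility of its output is the same unproved core. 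What you have is a plausible and well-organized research plan, with a correct reduction and a correct base case at $t=0$ (where Theorems~\ref{hyperplane bijection theorem} and~\ref{main} give $\psi(\mathbb{O}_{A(0)})=\psi(\mathbb{O})=\mathcal{S}$), but the conjecture remains open under your proposal.
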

A similar conjecture holds if one first chooses $v_0$ as a sink: replace $A$
with an $n\times n$ matrix, and label regions as in Section~\ref{sink}.  So the
central region would be labeled with the configuration that assigns $0$ to
vertices connected to $v_0$ and $-1$ to the other vertices (including $v_0$).
We conjecture that the nonnegative configurations that arise as labels are
exactly the $G$-parking functions.  The $G$-Shi conjecture of Duval, Klivans,
and Martin is a special case.

In the spirit of \cite{athanasiadis} and \cite{pak-stanley} , it would be
interesting to extend our results to the case of multigraphs: graphs in which
multiple edges are allowed between vertices.  If there are $k$ edges between
$v_i$ and $v_j$, one might replace the two hyperplanes $x_i-x_j=\pm1$ with the
$2k$ hyperplanes $x_i-x_j=\pm1,\dots,\pm k$.
\bibliography{soap}{}
\bibliographystyle{plain}
\end{document}